\documentclass[12pt,reqno]{article}

\usepackage[usenames]{color}
\usepackage{amssymb}
\usepackage{amsmath}
\usepackage{amsthm}
\usepackage{amsfonts}
\usepackage{amscd}
\usepackage{graphicx}

\usepackage[colorlinks=true,
linkcolor=webgreen,
filecolor=webbrown,
citecolor=webgreen]{hyperref}

\definecolor{webgreen}{rgb}{0,.5,0}
\definecolor{webbrown}{rgb}{.6,0,0}

\usepackage{color}
\usepackage{fullpage}
\usepackage{float}

\usepackage{graphics}
\usepackage{latexsym}
\usepackage{epsf}
\usepackage{accents}

\setlength{\textwidth}{6.5in}
\setlength{\oddsidemargin}{.1in}
\setlength{\evensidemargin}{.1in}
\setlength{\topmargin}{-.1in}
\setlength{\textheight}{8.4in}

\newcommand{\red}[1]{{\color{red}#1}}


\begin{document}

\theoremstyle{plain}
\newtheorem{theorem}{Theorem}
\newtheorem{corollary}[theorem]{Corollary}
\newtheorem{lemma}{Lemma}
\newtheorem{example}{Example}

\begin{center}
\vskip 1cm{\LARGE\bf 
On Ledin and Brousseau's summation problems\\
}
\vskip 1cm
\large
Kunle Adegoke \\
Department of Physics and Engineering Physics\\
Obafemi Awolowo University\\
220005 Ile-Ife, Nigeria\\
\href{mailto:adegoke00@gmail.com}{\tt adegoke00@gmail.com} \\
\end{center}

\vskip .2 in

\noindent 2010 {\it Mathematics Subject Classification}:
Primary 11B39; Secondary 11B37.

\noindent \emph{Keywords: }
Fibonacci number, Lucas number, Eulerian number, summation identity, series, Brousseau summation problem, Ledin summation problem, Horadam sequence.

\begin{abstract}
\noindent We develop a recursive scheme, as well as polynomial forms (polynomials in $n$ of degree $m$), for the evaluation of Ledin and Brousseau's Fibonacci sums of the form $S(m,n,r)=\sum_{k=1}^nk^mF_{k + r}$, $T(m,n,r)=\sum_{k=1}^nk^mL_{k + r}$ for non-negative integers $m$ and $n$ and arbitrary integer $r$; $F_j$ and $L_j$ being the $j^{th}$ Fibonacci and Lucas numbers. We also extend the study to a general second order sequence by establishing a recursive procedure to determine $W(m,n,r;a,b,p,q)=\sum_{k=1}^nk^mw_{k+r}$ where $(w_j(a,b;p,q))$ is the Horadam sequence defined by $w_0  = a,\,w_1  = b;\,w_j  = pw_{j - 1}  - qw_{j - 2}\, (j \ge 2);$ where $a$, $b$, $p$ and $q$ are arbitrary complex numbers, with $p\ne 0$ and $q\ne 0$. An explicit polynomial form for $W(m,n,r;a,b,1,q)$ and more generally for the sum $\mathcal W(m,n,h,r;a,b,p,q) = \sum_{k = 1}^n {V_h^{- k}k^m w_{hk + r}}$, where $(V_j(p,q))=(w_j(2,p;p,q))$, is established. Finally a polynomial form is established for a Ledin-Brousseau sum involving Horadam numbers with subscripts in arithmetic progression.
\end{abstract}

\section{Introduction}
Erbacher and Fuchs~\cite{sol64}, Ledin~\cite{ledin67}, Brousseau~\cite{brousseau67}, Zeitlin~\cite{zeitlin67} and recently Ollerton and Shannon~\cite{shannon21,ollerton20} and Dresden~\cite{dresden22} have developed various methods, including linear operator techniques, linear recurrence relations, finite differences approach and matrix methods, to study Fibonacci sums of the form
\[
S(m,n)=S(m,n,0)=\sum_{k=1}^nk^mF_k,\quad T(m,n)=T(m,n,0)=\sum_{k=1}^nk^mL_k,
\]
where $F_j$ and $L_j$ are the $j^{th}$ Fibonacci and Lucas numbers and $m$ and $n$ are non-negative integers. The sums $S(0,n)$, $T(0,n)$, $S(1,n)$, $T(1,n)$ are well-known. The sum $S(3,n)$, proposed as a problem by Brother U.~Alfred~\cite{prob63}, was later evaluated by Erbacher and Fuchs and also Dresner and Bicknell~\cite{sol64}. For values of $S(m,n)$ for $m=0,1,2,\ldots, 10$ and $T(m,n)$ for $m=0,1,2,\ldots, 5$, the interested reader may see Ledin~\cite[Table I, Table III]{ledin67}; noting that some corrections for Ledin's Table I are provided by Shannon and Ollerton~\cite[p.48]{shannon21}. 

Ledin~\cite{ledin67} has shown that $S(m,n)$ and $T(m,n)$ can be expressed in the form
\[
S(m,n) = P_1 (m,n)F_n  + P_2 (m,n)F_{n + 1}  + C(m),
\]

\[
T(m,n) = P_1 (m,n)L_n  + P_2 (m,n)L_{n + 1}  + K(m),
\]
where $P_1(m,n)$ and $P_2(m,n)$ are polynomials in $n$ of degree $m$ and $C(m)$ and $K(m)$ are constants depending only on $m$. Ledin gave some properties of $P_1$ and $P_2$ and developed a scheme for obtaining them by simple integration.

We will establish a recurrence relation for each of $P_1(m,n)$, $P_2(m,n)$, $C(m)$ and $K(m)$ through which $S(m,n)$ and $T(m,n)$ can then be determined. Specifically we will show that
\[
P_1 (m,n) = (n + 2)^m  - \sum_{j = 0}^{m - 1} {\binom mj(2^{m - j}  + 1)P_1 (j,n)},
\]

\[
P_2 (m,n) = (n + 1)^m  - \sum_{j = 0}^{m - 1} {\binom mj(2^{m - j}  + 1)P_2 (j,n)},
\]

\[
C (m) =- 1  - \sum_{j = 0}^{m - 1} {\binom mj(2^{m - j}  + 1)C (j)},
\]
and
\[
K (m) = -(2^{m + 1} + 1)  - \sum_{j = 0}^{m - 1} {\binom mj(2^{m - j}  + 1)K (j)}.
\]
Note that, in view of the definition of the empty sum~\eqref{eq.dcfw5vp}, the above recursive formulas already subsume the initial conditions: $P_1 (0,n)=1$, $P_2 (0,n)=1$, $C(0)=-1$, $K(0)=-3$. Our approach is different from that of Shannon and Ollerton~\cite{shannon21} who used algebraic methods to show that $S(m, n)$ satisfies a linear recurrence relation. Their expressions involve certain matrix elements and are rather complicated.

In the sequel, we will establish explicit polynomial forms for $P_1 (m,n)$ and $P_2 (m,n)$ and obtain the constants $C(m)$ and $K(m)$ by showing that
\[
P_1 (m,n) = n^m  + \sum\limits_{s = 1}^m {( - 1)^{s} \binom msn^{m - s} \sum\limits_{j = 1}^s {A(s,j)F_{j + s} } },
\]

\[
P_2 (m,n) = n^m  + \sum\limits_{s = 1}^m {( - 1)^{s} \binom msn^{m - s} \sum\limits_{j = 1}^s {A(s,j)F_{j + s + 1} } },
\]

\[
C(m) =  - \delta _{m,0} F_0  + ( - 1)^{m + 1} \sum_{j = 0}^m {A(m,j)F_{j + m + 1} } ,
\]

\[
K(m) =  - \delta _{m,0} L_0  + ( - 1)^{m + 1} \sum_{j = 0}^m {A(m,j)L_{j + m  + 1} }, 
\]
where $\delta_{ij}$ is Kronecker delta and $A(i,j)$ are Eulerian numbers (OEIS  \htmladdnormallink{A123125}{https://oeis.org/A123125}) defined, for non-negative integers $i$ and $j$, by
\begin{equation}\label{eq.pr9b061}
A(i,j)=\sum_{t=0}^j{(-1)^t\binom{i + 1}t(j - t)^i},
\end{equation}
with $A(0,0)=1$, $A(i,0)=0$ for $i\ge 1$.

We will also give polynomial forms for $S(m,n)$ and $T(m,n)$, for $m$ and $n$ non-negative integers, namely,
\[
\begin{split}
S(m,n) &= \sum_{k = 1}^n {k^m F_k }\\ 
 &= -\delta _{m,0}F_0 + n^m F_{n + 2}  + ( - 1)^{m + 1} \sum_{j = 0}^m {A(m,j)F_{j + m + 1} }\\
&\quad- \sum_{s = 1}^m {( - 1)^{s + 1} \binom msn^{m - s} \sum_{j = 1}^s {A(s,j)F_{j + n + s + 1} } }, 
\end{split}
\]

\[
\begin{split}
T(m,n) &= \sum_{k = 1}^n {k^m L_k }\\ 
 &=-\delta _{m,0}L_0 + n^m L_{n + 2}  + ( - 1)^{m + 1} \sum_{j = 0}^m {A(m,j)L_{j + m + 1} }\\
&\quad- \sum_{s = 1}^m {( - 1)^{s + 1} \binom msn^{m - s} \sum_{j = 1}^s {A(s,j)L_{j + n + s + 1} } }. 
\end{split}
\]
The Fibonacci numbers, $F_n$, and the Lucas numbers, $L_n$, are defined, for \text{$n\in\mathbb Z$}, through the recurrence relations 
\[
F_n=F_{n-1}+F_{n-2}, \text{($n\ge 2$)},\quad\text{$F_0=0$, $F_1=1$};
\]
and
\[
L_n=L_{n-1}+L_{n-2}, \text{($n\ge 2$)},\quad\text{$L_0=2$, $L_1=1$};
\]
with
\[
F_{-n}=(-1)^{n-1}F_n,\quad L_{-n}=(-1)^nL_n.
\]
Throughout this paper, we denote the golden ratio, $(1+\sqrt 5)/2$, by $\alpha$ and write $\beta=(1-\sqrt 5)/2=-1/\alpha$, so that $\alpha\beta=-1$ and $\alpha+\beta=1$. 

Explicit formulas (Binet formulas) for the Fibonacci and Lucas numbers are
\[
F_n  = \frac{{\alpha ^n  - \beta ^n }}{{\alpha  - \beta }},\quad L_n  = \alpha ^n  + \beta ^n,\quad n\in\mathbb Z.
\]
Koshy \cite{koshy} and Vajda \cite{vajda} have written excellent books dealing with Fibonacci and Lucas numbers.

We will extend the study of the Ledin and Brousseau's summation problems to the Horadam sequence~\cite{horadam65}, $(w_j) = \left(w_j(a,b; p, q)\right)$, defined by the recurrence relation
\begin{equation}\label{eq.vhrb5b3}
w_0  = a,\,w_1  = b;\,w_j  = pw_{j - 1}  - qw_{j - 2}\, (j \ge 2);
\end{equation}
where $a$, $b$, $p$ and $q$ are arbitrary complex numbers, with $p\ne 0$, $q\ne 0$ and $p\ne q + 1$.

Two important cases of  $(w_n)$ are the Lucas sequences of the first kind, $(U_n(p,q))=(w_n(0,1;p,q))$, and of the second kind, $(V_n(p,q))=(w_n(2,p;p,q))$; so that 
\[
\mbox{$U_0=0$, $U_1=1$};\, U_n=pU_{n-1}-qU_{n-2}, \mbox{($n\ge 2$)};
\]
and
\[
\mbox{$V_0=2$, $V_1=p$};\, V_n=pV_{n-1}-qV_{n-2}, \mbox{($n\ge 2$)}.
\]
The most well-known Lucas sequences are the Fibonacci sequence, $(F_n)=(U_n(1,-1))$ and the sequence of Lucas numbers, $(L_n)=(V_n(1,-1)$.

Extension of the definition of $w_n$ to negative subscripts is provided by writing the recurrence relation as $w_{-n}=(pw_{-n+1}-w_{-n+2})/q$.

We will establish a recursive procedure to evaluate $W(m,n,r;a,b,p,q)=\sum_{k=1}^nk^mw_{k+r}$. An explicit polynomial form will be developed for $W(m,n,0;a,b,1,q)$ and more generally for the sum $\mathcal W(m,n,h,r;a,b,p,q) = \sum_{k = 1}^n {V_h^{- k}k^m w_{hk + r}}$. Finally, we will evaluate the Ledin-Brousseau summation for the Horadam sequence with indices in arithmetic progression by showing that
\[
\begin{split}
&W(m,n,r,h;a,b,p,q)=\sum_{k = 1}^n {k^m w_{hk + r} }\\
&  =  - \delta _{m,0} w_r  - n^m \left( {\frac{{w_{h(n + 1) + r}  - q^h w_{hn + r} }}{{1 - V_h  + q^h }}} \right)\\
&\qquad + \frac1{(1 - V_h  + q^h )^{m + 1} }\sum_{c = 0}^{m + 1} {( - 1)^c \binom{m + 1}cq^{hc} \sum_{j = 0}^m {A(m,j)w_{h(j - c) + r} } } \\
&\qquad\quad- \sum_{s = 1}^m {\binom ms\frac{{n^{m - s} }}{{(1 - V_h  + q^h )^{s + 1} }}\sum_{c = 0}^{s + 1} {( - 1)^c \binom{s + 1}cq^{hc} \sum_{j = 0}^s {A(s,j)w_{h(j - c + n) + r} } } } .
\end{split}
\]
Throughout this paper we assume $0^0=1$ and take the empty sum as
\begin{equation}\label{eq.dcfw5vp}
\sum_{k=j}^{j - 1}{f_k}=0,
\end{equation}
for any arbitrary sequence $(f_i)$.
\section{A recursive relation for each of $P_1(m,n)$, $P_2(m,n)$, $C(m)$ and $K(m)$}
With $z=e^yx$, $x,y\in\mathbb R$, in the geometric progression summation identity
\[
\sum_{k=0}^nz^k = \frac{z^{n+1}-1}{z-1},\quad z\ne 1,
\]
we can define
\[
R(x,y,n) = \sum_{k = 0}^n {e^{ky} x^k }  = \frac{{e^{y( n + 1)} x^{n + 1}  - 1}}{{e^y x - 1}}.
\]
Let
\begin{equation}\label{eq.bcycobv}
Q(x,m,n) = \sum_{k = 0}^n {k^m x^k };
\end{equation}
so that
\begin{equation}\label{eq.idod6y1}
Q(x,m,n) = \left. {\frac{{\partial ^m }}{{\partial y^m }}R(x,y,n)} \right|_{y = 0}  = \left. {\frac{{\partial ^m }}{{\partial y^m }}\frac{{e^{y( n + 1)} x^{n + 1}  - 1}}{{e^y x - 1}}} \right|_{y = 0}.
\end{equation}
\begin{lemma}
For non-negative integers $m$ and $n$,
\[\tag{F1}
S(m,n) = \sum_{k = 1}^n {k^m F_k }  = \left. {\frac{{\partial ^m }}{{\partial y^m }}\left( {\frac{{e^{y(n + 2)} F_n  + e^{y(n + 1)} F_{n + 1}  - e^y }}{{e^{2y}  + e^y  - 1}}} \right)} \right|_{y = 0},
\]

\[\tag{L1}
T(m,n) = \sum_{k = 1}^n {k^m L_k }  = -L_0\,\delta _{m,0} + \left. {\frac{{\partial ^m }}{{\partial y^m }}\left( {\frac{{e^{y(n + 2)} L_n  + e^{y(n + 1)} L_{n + 1}  + e^y  - 2}}{{e^{2y}  + e^y  - 1}}} \right)} \right|_{y = 0}.
\]
\end{lemma}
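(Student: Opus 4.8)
The plan is to derive both identities from the Binet formulas together with the differentiation formula \eqref{eq.idod6y1}, specialized to $x=\alpha$ and $x=\beta$. Since $F_0=0$, the $k=0$ term contributes nothing to the Fibonacci sum, so $S(m,n)=\sum_{k=0}^n k^mF_k$; writing $F_k=(\alpha^k-\beta^k)/(\alpha-\beta)$ and recalling the definition \eqref{eq.bcycobv} of $Q$ gives
\[
S(m,n)=\frac{Q(\alpha,m,n)-Q(\beta,m,n)}{\alpha-\beta}.
\]
Because $\alpha\neq1$ and $\beta\neq1$, the factor $e^yx-1$ in the denominator of $R(x,y,n)$ is non-zero for $y$ near $0$ at $x=\alpha,\beta$, so \eqref{eq.idod6y1} is valid there; and since the operation $\partial_y^m(\,\cdot\,)\big|_{y=0}$ is linear, the difference may be pulled inside, yielding
\[
S(m,n)=\left.\frac{\partial^m}{\partial y^m}\!\left(\frac{R(\alpha,y,n)-R(\beta,y,n)}{\alpha-\beta}\right)\right|_{y=0}.
\]

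It then remains to simplify $\big(R(\alpha,y,n)-R(\beta,y,n)\big)/(\alpha-\beta)$ into the closed form in (F1). Setting $u=e^y$, I would put the two fractions $\frac{u^{n+1}\alpha^{n+1}-1}{u\alpha-1}$ and $\frac{u^{n+1}\beta^{n+1}-1}{u\beta-1}$ over the common denominator
\[
(u\alpha-1)(u\beta-1)=u^2\alpha\beta-u(\alpha+\beta)+1=-(u^2+u-1),
\]
where I used $\alpha\beta=-1$ and $\alpha+\beta=1$. Expanding the numerator and collecting powers of $u$, the coefficient of $u^{n+2}$ is $\alpha^{n+1}\beta-\alpha\beta^{n+1}=\alpha\beta(\alpha^n-\beta^n)=-(\alpha-\beta)F_n$, that of $u^{n+1}$ is $-(\alpha^{n+1}-\beta^{n+1})=-(\alpha-\beta)F_{n+1}$, and that of $u$ is $-(\beta-\alpha)=(\alpha-\beta)$, so the numerator equals $(\alpha-\beta)\big(-u^{n+2}F_n-u^{n+1}F_{n+1}+u\big)$. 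Dividing by $-(u^2+u-1)(\alpha-\beta)$ and re-substituting $u=e^y$ produces exactly the bracketed expression in (F1).

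For (L1) the steps are identical with $L_k=\alpha^k+\beta^k$, the sole difference being the $k=0$ term: here it equals $0^mL_0$, which is $0$ when $m\ge1$ but equals $L_0=2$ when $m=0$ (since $0^0=1$), so $T(m,n)=\sum_{k=0}^nk^mL_k-\delta_{m,0}L_0=\partial_y^m\big(R(\alpha,y,n)+R(\beta,y,n)\big)\big|_{y=0}-\delta_{m,0}L_0$. The same common-denominator computation, now using $\alpha^{n+1}\beta+\alpha\beta^{n+1}=\alpha\beta(\alpha^n+\beta^n)=-L_n$, $\alpha^{n+1}+\beta^{n+1}=L_{n+1}$ and $\alpha+\beta=1$, turns the numerator into $-u^{n+2}L_n-u^{n+1}L_{n+1}-u+2$; dividing by $-(u^2+u-1)$ and setting $u=e^y$ gives the expression in (L1).

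There is no substantial obstacle here. The points that need care are purely bookkeeping: tracking the $k=0$ term (vacuous for $F$, responsible for the $-\delta_{m,0}L_0$ term for $L$), noting that $\alpha,\beta\neq1$ so that \eqref{eq.idod6y1} may legitimately be evaluated at these values, and carrying out the partial-fraction-type simplification without sign errors. It is worth recording that the denominator $e^{2y}+e^y-1$ is precisely $-(e^y\alpha-1)(e^y\beta-1)$, which is why the characteristic polynomial $x^2+x-1$ of the Fibonacci recurrence surfaces; at $y=0$ it equals $1$, so no singularity arises in the final evaluation at $y=0$.
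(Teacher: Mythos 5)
Your proposal is correct and follows essentially the same route as the paper's proof: both express $S(m,n)$ and $T(m,n)$ via $Q(\alpha,m,n)\mp Q(\beta,m,n)$ using the Binet formulas, invoke the linearity of $\partial_y^m(\cdot)|_{y=0}$ applied to \eqref{eq.idod6y1}, and combine the two rational functions over the common denominator $(e^y\alpha-1)(e^y\beta-1)=-(e^{2y}+e^y-1)$. The only cosmetic difference is that you track the $k=0$ term directly (vacuous for $F_0=0$, giving the $-\delta_{m,0}L_0$ for the Lucas case), whereas the paper absorbs it into the definition of $Q$ via the $\delta_{m,0}$ in (H1)--(H2); the algebra and conclusions are identical.
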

\begin{proof}
From~\eqref{eq.bcycobv} we have
\[\tag{H1}
Q(\alpha ,m,n) = \delta _{m,0}  + \sum_{k = 1}^n {k^m \alpha ^k } ,
\]
\[\tag{H2}
Q(\beta ,m,n) = \delta _{m,0}  + \sum_{k = 1}^n {k^m \beta ^k } ,
\]
from which we get
\begin{equation}\label{eq.ecnfdhw}
Q(\alpha ,m,n) - Q(\beta ,m,n) = \sum_{k = 1}^n {k^m (\alpha ^k  - \beta ^k )} +0\,\delta _{m,0}  = S(m,n)\sqrt 5 + F_0\delta _{m,0}\sqrt 5
\end{equation}
and
\[
Q(\alpha ,m,n) + Q(\beta ,m,n) = \sum_{k = 1}^n {k^m (\alpha ^k  + \beta ^k )} + 2\,\delta _{m,0} = T(m,n) + L_0\delta _{m,0}.
\]
But, using~\eqref{eq.idod6y1} and the linearity of partial differentiation, we have
\[
\begin{split}
Q(\alpha ,m,n) - Q(\beta ,m,n) &= \left. {\frac{{\partial ^m }}{{\partial y^m }}\left( {\frac{{(e^y \alpha )^{n + 1}  - 1}}{{e^y \alpha  - 1}} - \frac{{(e^y \beta )^{n + 1}  - 1}}{{e^y \beta  - 1}}} \right)} \right|_{y = 0}\\
&= \left. {\frac{{\partial ^m }}{{\partial y^m }}\left( {\frac{{(e^y \beta  - 1)((e^y \alpha )^{n + 1}  - 1) - (e^y \alpha  - 1)((e^y \beta )^{n + 1}  - 1)}}{{(e^y \alpha  - 1)(e^y \beta  - 1)}}} \right)} \right|_{y = 0},
\end{split}
\]
and
\[
\begin{split}
Q(\alpha ,m,n) + Q(\beta ,m,n) &= \left. {\frac{{\partial ^m }}{{\partial y^m }}\left( {\frac{{(e^y \alpha )^{n + 1}  - 1}}{{e^y \alpha  - 1}} + \frac{{(e^y \beta )^{n + 1}  - 1}}{{e^y \beta  - 1}}} \right)} \right|_{y = 0}\\
&= \left. {\frac{{\partial ^m }}{{\partial y^m }}\left( {\frac{{(e^y \beta  - 1)((e^y \alpha )^{n + 1}  - 1) + (e^y \alpha  - 1)((e^y \beta )^{n + 1}  - 1)}}{{(e^y \alpha  - 1)(e^y \beta  - 1)}}} \right)} \right|_{y = 0};
\end{split}
\]
from which upon clearing brackets, re-arranging the terms and using the Binet formulas we get
\begin{equation}\label{eq.q1yc93r}
Q(\alpha ,m,n) - Q(\beta ,m,n) = \left. {\frac{{\partial ^m }}{{\partial y^m }}\left( {\frac{{ - e^{y(n + 2)} F_n  + e^y  - e^{y(n + 1)} F_{n + 1} }}{{ - e^{2y}  - e^y  + 1}}} \right)} \right|_{y = 0} \sqrt 5
\end{equation}
and
\[
Q(\alpha ,m,n) + Q(\beta ,m,n) = \left. {\frac{{\partial ^m }}{{\partial y^m }}\left( {\frac{{ -e^{y(n + 2)} L_n  - e^{y(n + 1)} L_{n + 1}  - e^y  + 2}}{{ - e^{2y}  - e^y  + 1}}} \right)} \right|_{y = 0}.
\]
Comparing~\eqref{eq.ecnfdhw} and~\eqref{eq.q1yc93r} we get (F1). The proof of (L1) is similar. 
\end{proof}
Clearly, (F1) and (L1) can be written in the Ledin form
\[
S(m,n) = P_1 (m,n)F_n  + P_2 (m,n)F_{n + 1}  + C(m),
\]

\[
T(m,n) = P_1 (m,n)L_n  + P_2 (m,n)L_{n + 1}  + K(m),
\]
with
\begin{equation}\label{eq.u7ul4a4}
P_1 (m,n) = \left. {\frac{{\partial ^m }}{{\partial y^m }}\frac{{e^{y(n + 2)} }}{{e^{2y}  + e^y  - 1}}} \right|_{y = 0} ,\quad P_2 (m,n) = \left. {\frac{{\partial ^m }}{{\partial y^m }}\frac{{e^{y(n + 1)} }}{{e^{2y}  + e^y  - 1}}} \right|_{y = 0},
\end{equation}
and
\begin{equation}\label{eq.ry2tgvo}
C(m) =  - \left. {\frac{{\partial ^m }}{{\partial y^m }}\frac{{e^y }}{{e^{2y}  + e^y  - 1}}} \right|_{y = 0} ,\quad K(m) = \left. {\frac{{\partial ^m }}{{\partial y^m }}\frac{{e^y  - 2}}{{e^{2y}  + e^y  - 1}}} \right|_{y = 0} - L_0\,\delta _{m,0}.
\end{equation}
Note that the first identity in \eqref{eq.ry2tgvo} already answers the question raised in the concluding comments of Ollerton and Shannon~\cite{ollerton20} concerning the relationship between the Ledin form and \mbox{$e^{-y}/(1-e^{y}+e^{-y})$}.

We are now in a position to state our first main result.
\begin{theorem}\label{thm.o2yjlhn}
Let $m$ and $n$ be non-negative integers. Then,
\[\tag{F}
S(m,n) = \sum_{k=1}^nk^mF_k = P_1 (m,n)F_n  + P_2 (m,n)F_{n + 1}  + C(m),
\]

\[\tag{L}
T(m,n) = \sum_{k=1}^nk^mL_k = P_1 (m,n)L_n  + P_2 (m,n)L_{n + 1}  + K(m),
\]
where $P_1(m,n)$, $P_2(m,n)$, $C(m)$ and $K(m)$ are given recursively by
\[
P_1 (m,n) = (n + 2)^m  - \sum_{j = 0}^{m - 1} {\binom mj(2^{m - j}  + 1)P_1 (j,n)},
\]

\[
P_2 (m,n) = (n + 1)^m  - \sum_{j = 0}^{m - 1} {\binom mj(2^{m - j}  + 1)P_2 (j,n)},\]

\[
C (m) =- 1  - \sum_{j = 0}^{m - 1} {\binom mj(2^{m - j}  + 1)C (j)},
\]
and
\[
K (m) = -(2^{m + 1} + 1)  - \sum_{j = 0}^{m - 1} {\binom mj(2^{m - j}  + 1)K (j)}.
\]
\end{theorem}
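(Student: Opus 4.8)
The plan is to exploit the representation recorded in~\eqref{eq.u7ul4a4} and~\eqref{eq.ry2tgvo}: each of $P_1(m,n)$, $P_2(m,n)$, $-C(m)$ and $K(m)+L_0\delta_{m,0}$ is of the form $\left.\partial_y^m\bigl(N(y)/D(y)\bigr)\right|_{y=0}$, where the denominator is always the single function $D(y)=e^{2y}+e^y-1$ and the numerator $N(y)$ is, respectively, $e^{y(n+2)}$, $e^{y(n+1)}$, $e^y$, and $e^y-2$. The identities (F) and (L) themselves require no new argument, being precisely the Ledin-form rewriting of (F1) and (L1) already noted just after the Lemma; so the whole content of the theorem lies in the four recurrences. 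I would therefore prove once and for all a single ``master'' recurrence for $P(y):=N(y)/D(y)$,
\[
\left.\partial_y^m P\right|_{y=0}=N^{(m)}(0)-\sum_{j=0}^{m-1}\binom mj\bigl(2^{m-j}+1\bigr)\left.\partial_y^j P\right|_{y=0},
\]
valid for any $N$, and then specialise.

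To obtain the master recurrence, write $N=D\cdot P$ and apply the Leibniz rule: $N^{(m)}(0)=\sum_{i=0}^m\binom mi D^{(i)}(0)\,P^{(m-i)}(0)$. A direct computation gives $D(0)=1$ and, since $D^{(k)}(y)=2^k e^{2y}+e^y$ for $k\ge1$, also $D^{(k)}(0)=2^k+1$ for every $k\ge1$. The point to notice is that $D(0)=1$, rather than the value $2^0+1=2$ predicted by the pattern; this makes the $i=0$ term of the Leibniz sum equal to $P^{(m)}(0)$ exactly, so that after the substitution $j=m-i$ the surviving sum runs only over $0\le j\le m-1$ with weights $\binom mj D^{(m-j)}(0)=\binom mj(2^{m-j}+1)$. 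Solving for $P^{(m)}(0)$ gives the displayed identity.

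It remains to substitute the four numerators. For $N(y)=e^{y(n+2)}$ one has $N^{(m)}(0)=(n+2)^m$ and $P^{(j)}(0)=P_1(j,n)$, giving the $P_1$-recurrence; $N(y)=e^{y(n+1)}$ gives the $P_2$-recurrence identically with $(n+1)^m$. For $N(y)=e^y$ we have $N^{(m)}(0)=1$ and $P^{(j)}(0)=-C(j)$, which rearranges to the $C$-recurrence. The one case that repays a little care is $K$: here $N(y)=e^y-2$ has $N(0)=-1$ but $N^{(m)}(0)=1$ for $m\ge1$, i.e. $N^{(m)}(0)=1-2\delta_{m,0}$, while $P^{(j)}(0)=K(j)+L_0\delta_{j,0}=K(j)+2\delta_{j,0}$. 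Feeding this into the master recurrence, for $m\ge1$ the lone $j=0$ Kronecker term contributes an extra $-2\binom m0(2^m+1)$, and $1-2(2^m+1)=-(2^{m+1}+1)$, which is exactly the constant in the stated $K$-recurrence; for $m=0$ both sides collapse to $K(0)=-3$ under the empty-sum convention~\eqref{eq.dcfw5vp}. That same convention makes all four recurrences self-starting, since the $m=0$ instances read $P_1(0,n)=P_2(0,n)=N(0)/D(0)=1$, $C(0)=-1$, $K(0)=-3$.

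Apart from this bookkeeping with the $\delta$-terms in the $K$ case, the argument is routine; the only structurally delicate observation is that $D(0)=1$ breaks the pattern $D^{(k)}(0)=2^k+1$, and it is precisely this that makes the top Leibniz term collapse and forces the summation to stop at $m-1$ rather than $m$.
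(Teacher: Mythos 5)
Your proof is correct and takes essentially the same route as the paper: both arguments clear the denominator $D(y)=e^{2y}+e^y-1$, apply the Leibniz rule to $N=D\cdot P$, and exploit $D(0)=1$ versus $D^{(k)}(0)=2^{k}+1$ ($k\ge 1$) to isolate the top term and stop the sum at $j=m-1$. The only (harmless) difference is that the paper derives the $C$ and $K$ recurrences from the specializations $C(m)=-P_2(m,0)$ and $K(m)=-2P_1(m,0)-P_2(m,0)$, whereas you feed the numerators $e^y$ and $e^y-2$ directly into the master recurrence and track the Kronecker-delta corrections by hand.
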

\begin{proof}
We seek to perform the differentiations prescribed in \eqref{eq.u7ul4a4} and \eqref{eq.ry2tgvo}.

Let
\begin{equation}\label{eq.kq8eqc2}
U(y,n) = \frac{{e^{y(n + 2)} }}{{e^{2y}  + e^y  - 1}};
\end{equation}
so that, for non-negative integer $j$,
\begin{equation}\label{eq.t65mbvk}
P_1 (j,n) = \left. {\frac{{\partial ^j }}{{\partial y^j }}U(y,n)} \right|_{y = 0}.
\end{equation}
For brevity let $U\equiv U(y,n)$ and write \eqref{eq.kq8eqc2} as
\[
(e^{2y}  + e^y  - 1)U = e^{y(n + 2)}.
\]
Leibnitz rule for differentiation gives
\[
\sum_{j = 0}^m {\binom mj\frac{{\partial ^{m - j} }}{{\partial y^{m - j} }}(e^{2y}  + e^y  - 1)}\frac{{\partial ^j U}}{{\partial y^j }}  = \frac{{\partial ^m }}{{\partial y^m }}e^{y(n + 2)}.
\]
Thus,
\[
\begin{split}
&\sum_{j = 0}^{m - 1} {\binom mj(2^{m - j} e^{2y}  + e^y )\frac{{\partial ^j U}}{{\partial y^j }}}  + (e^{2y}  + e^y  - 1)\frac{{\partial ^m U}}{{\partial y^m }}\\
&\qquad= (n + 2)^m e^{y(n + 2)}.
\end{split}
\]
Evaluating both sides at $y=0$, making use of~\eqref{eq.t65mbvk}, we have
\[
\sum_{j = 0}^{m - 1} {\binom mjP_1 (j,n)(2^{m - j}  + 1)}  + P_1 (m,n) = (n + 2)^m;
\]
from which the $P_1(m,n)$ recurrence follows. A similar procedure gives $P_2(m,n)$. Finally, the $C(m)$ and $K(m)$ recurrence relations follow from
\[
C(m)= - P_2(m,0),\quad K(m)=- 2P_1(m,0) - P_2(m,0). 
\]
\end{proof}
The original sum in which Brousseau was interested is $\sum_{k=1}^n k^mF_{k + r}$. 

From (H1) and (H2) we find
\begin{gather}
\alpha ^r Q(\alpha ,m,n) - \beta ^r Q(\beta ,m,n) = \sqrt 5 \sum_{k = 1}^n {k^m F_{k + r} } +\sqrt 5\delta _{m,0}F_r,\label{eq.faka3lc}\\
\alpha ^r Q(\alpha ,m,n) + \beta ^r Q(\beta ,m,n) = \sum_{k = 1}^n {k^m L_{k + r} } + \delta _{m,0}L_r.\label{eq.qucw3r8}
\end{gather}
But from \eqref{eq.idod6y1} and the Binet formulas, we get
\begin{equation}\label{eq.xxeie37}
\begin{split}
&\alpha ^r Q(\alpha ,m,n) - \beta ^r Q(\beta ,m,n)\\
&= \left. {\frac{{\partial ^m }}{{\partial y^m }}\left( {\frac{{e^{y(n + 2)} F_{n + r}  + e^{y(n + 1)} F_{n + r + 1}  - e^y F_{r - 1}  - F_r }}{{e^{2y}  + e^y  - 1}}} \right)} \right|_{y = 0} \sqrt 5, 
\end{split}
\end{equation}

\begin{equation}\label{eq.tko9zi9}
\begin{split}
&\alpha ^r Q(\alpha ,m,n) + \beta ^r Q(\beta ,m,n)\\
&= \left. {\frac{{\partial ^m }}{{\partial y^m }}\left( {\frac{{e^{y(n + 2)} L_{n + r}  + e^{y(n + 1)} L_{n + r + 1}  - e^y L_{r - 1}  - L_r }}{{e^{2y}  + e^y  - 1}}} \right)} \right|_{y = 0}. 
\end{split}
\end{equation}
Comparing \eqref{eq.faka3lc} and \eqref{eq.xxeie37} and \eqref{eq.qucw3r8} and \eqref{eq.tko9zi9}, we find
\begin{equation}\label{eq.xpcud2p}
S(m,n,r) = \sum_{k=1}^nk^mF_{k + r} = P_1 (m,n)F_{n + r}  + P_2 (m,n)F_{n + r + 1}  + C(m,r),
\end{equation}

\begin{equation}\label{eq.ktjpxxp}
T(m,n,r) = \sum_{k=1}^nk^mL_{k + r}= P_1 (m,n)L_{n + r}  + P_2 (m,n)L_{n + r + 1}  + K(m,r),
\end{equation}
where $P_1(m,n)$ and $P_2(m,n)$ are the same as in Theorem~\ref{thm.o2yjlhn} and $C(m,r)$ and $K(m,r)$ are given by
\[
C(m,r) =  - \left. {\frac{{\partial ^m }}{{\partial y^m }}\left( {\frac{{e^y F_{r - 1}  + F_r }}{{e^{2y}  + e^y  - 1}}} \right)} \right|_{y = 0}  - \delta _{m,0} F_r ,
\]

\[
K(m,r) =  - \left. {\frac{{\partial ^m }}{{\partial y^m }}\left( {\frac{{e^y L_{r - 1}  + L_r }}{{e^{2y}  + e^y  - 1}}} \right)} \right|_{y = 0}  - \delta _{m,0} L_r ,
\]
and can be found directly from \eqref{eq.xpcud2p}, \eqref{eq.ktjpxxp} and the recurrence relations for $P_1$ and $P_2$. Thus,
\[
\begin{split}
C(m,r)& =  - P_1 (m,0)F_r  - P_2 (m,0)F_{r + 1}\\ 
&=  - 2^m F_r  - F_{r + 1}  - \sum\limits_{j = 0}^{m - 1} {\binom mj(2^{m - j}  + 1)C(j,r)}, 
\end{split}
\]

\[
\begin{split}
K(m,r)& =  - P_1 (m,0)L_r  - P_2 (m,0)L_{r + 1}\\ 
&=  - 2^m L_r  - L_{r + 1}  - \sum\limits_{j = 0}^{m - 1} {\binom mj(2^{m - j}  + 1)K(j,r)}. 
\end{split}
\]
\section{Polynomial forms for $S(k,m)$ and $T(k,m)$}
Let $x$ be a real or complex number such that $x\ne0$ and $x\ne 1$; and let $m$ and $n$ be non-negative integers. Hsu and Tan~\cite{hsu00} have shown that
\begin{equation}\label{eq.k7m77ev}
\begin{split}
Q(x,m,n) &= \sum_{k = 0}^n {k^m x^k }\\
 &=  - n^m \frac{{x^{n + 1} }}{{1 - x}} + \frac{{A_m (x)}}{{(1 - x)^{m + 1} }} - \sum_{s = 1}^m {x^n \binom msn^{m - s} \frac{{A_s (x)}}{{(1 - x)^{s + 1} }}}, 
\end{split}
\end{equation}
where
\begin{equation}\label{eq.qjlois5}
A_i (x) = \sum_{j = 0}^i {A(i,j)x^j }, \quad i\in\mathbb N_0, \quad A_0(x)=1,
\end{equation}
where $A(i,j)$ are the Eulerian numbers defined in \eqref{eq.pr9b061}.
\begin{lemma}\label{lem.bvh0lkh}
If $j$ is a non-negative integer and $s$ is any integer, then,
\begin{gather}
\alpha ^s A_j (\alpha ) - \beta ^s A_j (\beta ) = \sqrt 5 \sum_{t = 0}^j {A(j,t)F_{t + s} },\label{eq.tytwb4a}\\
\alpha ^s A_j (\alpha ) + \beta ^s A_j (\beta ) = \sum_{t = 0}^j {A(j,t)L_{t + s} }\label{eq.i71ftvx}. 
\end{gather}
\end{lemma}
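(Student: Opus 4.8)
The plan is to prove this by direct substitution, since the Eulerian polynomial $A_j(x)$ defined in~\eqref{eq.qjlois5} is merely a finite linear combination of powers of $x$ with the Eulerian numbers $A(j,t)$ as coefficients. First I would write, for any integer $s$,
\[
\alpha ^s A_j (\alpha ) = \sum_{t = 0}^j {A(j,t)\,\alpha ^{t + s} }, \qquad \beta ^s A_j (\beta ) = \sum_{t = 0}^j {A(j,t)\,\beta ^{t + s} },
\]
noting that these are legitimate for negative $s$ as well, since $\alpha\beta=-1$ makes $\alpha^s$ and $\beta^s$ well defined for every $s\in\mathbb Z$.

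Then I would subtract (respectively add) these two identities term by term and invoke the Binet formulas $F_n=(\alpha^n-\beta^n)/(\alpha-\beta)$ and $L_n=\alpha^n+\beta^n$, valid for all $n\in\mathbb Z$ as recorded in the Introduction, applied with exponent $n=t+s$. Since $\alpha-\beta=\sqrt5$, the difference produces $\alpha^{t+s}-\beta^{t+s}=\sqrt5\,F_{t+s}$ and the sum produces $\alpha^{t+s}+\beta^{t+s}=L_{t+s}$; collecting the coefficients $A(j,t)$ then yields~\eqref{eq.tytwb4a} and~\eqref{eq.i71ftvx} respectively.

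There is essentially no obstacle here; the only point that deserves a word of care is that $s$ is allowed to be an arbitrary (possibly negative) integer, so one must invoke the Binet formulas in their extended range $n\in\mathbb Z$ — but this is exactly what the relations $F_{-n}=(-1)^{n-1}F_n$, $L_{-n}=(-1)^n L_n$ and the Binet formulas stated in the Introduction for $n\in\mathbb Z$ furnish. This lemma will then feed directly into the next section: specializing $x$ to $\alpha$ and $\beta$ in Hsu and Tan's formula~\eqref{eq.k7m77ev} and combining by means of~\eqref{eq.tytwb4a} and~\eqref{eq.i71ftvx} will give the announced polynomial forms for $S(m,n)$ and $T(m,n)$.
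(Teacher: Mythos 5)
Your proof is correct, and it is actually more direct than the one in the paper. You simply absorb $\alpha^s$ and $\beta^s$ into the polynomial, expand $\alpha^s A_j(\alpha)=\sum_{t=0}^j A(j,t)\alpha^{t+s}$, and apply the Binet formulas termwise with exponent $t+s$; since Binet holds for all integer exponents, the case of negative $s$ costs nothing, as you note. The paper instead takes a slightly longer route: writing $f=\alpha^sA_j(\alpha)-\beta^sA_j(\beta)$, it uses $\alpha^s=L_s-\beta^s$ and symmetrically $\beta^s=L_s-\alpha^s$ to produce two expressions for $f$, adds them to get $2f=F_s\sqrt5\,(A_j(\alpha)+A_j(\beta))+L_s(A_j(\alpha)-A_j(\beta))$, evaluates $A_j(\alpha)\pm A_j(\beta)$ by the $s=0$ instance of the termwise Binet expansion, and finally invokes the product-to-sum identity $F_sL_t+F_tL_s=2F_{s+t}$. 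Both arguments rest on the same foundation (linearity plus Binet), but yours eliminates the detour through the addition formula and is the cleaner of the two; the paper's version has the minor virtue of making explicit the intermediate identities $A_j(\alpha)\pm A_j(\beta)=\sum_t A(j,t)L_t$ and $\sqrt5\sum_t A(j,t)F_t$, a pattern it reuses later (Lemmas \ref{lem.mp76dtn} and \ref{lem.x8yix44}) for the Horadam case. Your concluding remark about how the lemma feeds into Theorem \ref{thm.gmjtnyz} matches the paper's use of it exactly.
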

\begin{proof}
Let
\[
f=\alpha ^s A_j (\alpha ) - \beta ^s A_j (\beta ).
\]
Since $\alpha^s=L_s - \beta^s$, we have
\begin{equation}\label{eq.s7llesz}
f = L_s A_j (\alpha ) - \beta ^s (A_j (\alpha ) + A_j (\beta )).
\end{equation}
Similarly, we find
\begin{equation}\label{eq.m08ht0e}
f =  - L_s A_j (\beta ) + \alpha ^s (A_j (\alpha ) + A_j (\beta )).
\end{equation}
Addition of \eqref{eq.s7llesz} and \eqref{eq.m08ht0e} gives
\[
2f = F_s \sqrt 5 (A_j (\alpha ) + A_j (\beta )) + L_s (A_j (\alpha ) - A_j (\beta )).
\]
Thus,
\[
\begin{split}
&\alpha ^s A_j (\alpha ) - \beta ^s A_j (\beta )\\
 &= \frac{{F_s \sqrt 5 }}{2}(A_j (\alpha ) + A_j (\beta )) + \frac{{L_s }}{2}(A_j (\alpha ) - A_j (\beta ))\\
 &= \frac{{F_s \sqrt 5 }}{2}\sum_{t = 0}^j {A(j,t)L_t }  + \frac{{L_s \sqrt 5 }}{2}\sum_{t = 0}^j {A(j,t)F_t },\quad\mbox{by \eqref{eq.qjlois5}},\\ 
 &= \sqrt 5 \sum_{t = 0}^j {A(j,t)\frac{{F_s L_t  + F_t L_s }}{2}},
\end{split}
\]
from which identity \eqref{eq.tytwb4a} follows. The proof of \eqref{eq.i71ftvx} is similar.
\end{proof}
\begin{theorem}\label{thm.gmjtnyz}
If $m$ and $n$ are non-negative integers, then,
\begin{equation}\label{eq.t8yp45b}
\begin{split}
S(m,n) &= \sum_{k = 1}^n {k^m F_k }=\sum_{k = 0}^n {k^m F_k } - \delta_{m,0}F_0\\ 
 &=- \delta_{m,0}F_0 + n^m F_{n + 2}  + ( - 1)^{m + 1} \sum_{j = 0}^m {A(m,j)F_{j + m + 1} }\\
&\quad- \sum_{s = 1}^m {( - 1)^{s + 1} \binom msn^{m - s} \sum_{j = 1}^s {A(s,j)F_{j + n + s + 1} } }, 
\end{split}
\end{equation}

\begin{equation}\label{eq.skp52tl}
\begin{split}
T(m,n) &= \sum_{k = 1}^n {k^m L_k } = \sum_{k = 0}^n {k^m L_k } - \delta_{m,0}L_0\\ 
 &= - \delta_{m,0}L_0 + n^m L_{n + 2}  + ( - 1)^{m + 1} \sum_{j = 0}^m {A(m,j)L_{j + m + 1} }\\
&\quad- \sum_{s = 1}^m {( - 1)^{s + 1} \binom msn^{m - s} \sum_{j = 1}^s {A(s,j)L_{j + n + s + 1} } }, 
\end{split}
\end{equation}
where $A(i,j)$ are Eulerian numbers defined in \eqref{eq.pr9b061}.
\end{theorem}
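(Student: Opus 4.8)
The plan is to specialise the Hsu--Tan formula \eqref{eq.k7m77ev} to $x=\alpha$ and $x=\beta$, exploit the arithmetic of the golden ratio to collapse the rational expressions into pure powers of $\alpha$ and $\beta$, and then combine the two specialisations through \eqref{eq.ecnfdhw} and its Lucas counterpart, converting the Eulerian polynomials into Fibonacci and Lucas sums by Lemma~\ref{lem.bvh0lkh}.

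First I would record the elementary identities $1-\alpha=\beta$, $1-\beta=\alpha$, $1/\beta=-\alpha$ and $1/\alpha=-\beta$, which follow at once from $\alpha+\beta=1$ and $\alpha\beta=-1$. Putting $x=\alpha$ in \eqref{eq.k7m77ev} and applying these, every denominator $(1-\alpha)^{k}$ becomes $\beta^{k}=(-\alpha)^{k}$, and one obtains
\[
Q(\alpha,m,n) = n^m\alpha^{n+2} + (-1)^{m+1}\alpha^{m+1}A_m(\alpha) - \sum_{s=1}^m (-1)^{s+1}\binom ms n^{m-s}\alpha^{n+s+1}A_s(\alpha),
\]
and, symmetrically, the same expression for $Q(\beta,m,n)$ with $\alpha$ replaced by $\beta$ throughout.

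Next I would form the difference $Q(\alpha,m,n)-Q(\beta,m,n)$ and the sum $Q(\alpha,m,n)+Q(\beta,m,n)$. In the difference, $n^m(\alpha^{n+2}-\beta^{n+2})=\sqrt5\,n^mF_{n+2}$ by the Binet formula, whereas $\alpha^{m+1}A_m(\alpha)-\beta^{m+1}A_m(\beta)$ and $\alpha^{n+s+1}A_s(\alpha)-\beta^{n+s+1}A_s(\beta)$ are rewritten by \eqref{eq.tytwb4a} of Lemma~\ref{lem.bvh0lkh} (used with $j=m$ and exponent $m+1$, and with $j=s$ and exponent $n+s+1$, respectively); the corresponding terms of the sum are handled by \eqref{eq.i71ftvx}. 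Dividing the difference by $\sqrt5$ and using \eqref{eq.ecnfdhw} gives \eqref{eq.t8yp45b}, and the sum together with $Q(\alpha,m,n)+Q(\beta,m,n)=T(m,n)+L_0\delta_{m,0}$ gives \eqref{eq.skp52tl}. A concluding cosmetic step invokes $A(s,0)=0$ for $s\ge1$ to start the innermost summation of the last term at $j=1$.

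I do not anticipate a genuine obstacle here: the proof is a controlled computation. The only point demanding care is the bookkeeping of the exponents fed into Lemma~\ref{lem.bvh0lkh} and the sign factors $(-1)^{s+1}$ arising when $1/\beta$ and $1/\alpha$ are replaced by $-\alpha$ and $-\beta$. The structural fact making everything fit is precisely $1-\alpha=\beta$ and $1-\beta=\alpha$, which turns the rational functions of Hsu--Tan into powers of $\alpha$ and $\beta$ and thereby lets Lemma~\ref{lem.bvh0lkh} act directly.
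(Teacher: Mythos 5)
Your proposal is correct and follows essentially the same route as the paper: specialize the Hsu--Tan formula \eqref{eq.k7m77ev} at $x=\alpha,\beta$, use $1-\alpha=\beta$, $1-\beta=\alpha$ and $\alpha\beta=-1$ to turn the denominators into signed powers, then combine via \eqref{eq.ecnfdhw} and Lemma~\ref{lem.bvh0lkh} together with the Binet formula. The exponent bookkeeping in your intermediate expression for $Q(\alpha,m,n)$ checks out, and the final remark that $A(s,0)=0$ for $s\ge 1$ justifies starting the inner sum at $j=1$.
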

\begin{proof}
From \eqref{eq.ecnfdhw} and \eqref{eq.k7m77ev} we have
\[
\begin{split}
&S(m,n)\sqrt 5 + F_0\delta _{m,0}\sqrt 5=Q(\alpha ,m,n) - Q(\beta ,m,n)\\
&= n^m \left( { - \frac{{\alpha ^{n + 1} }}{\beta } + \frac{{\beta ^{n + 1} }}{\alpha }} \right) + \left( {\frac{{A_m (\alpha )}}{{\beta ^{m + 1} }} - \frac{{A_m (\beta )}}{{\alpha ^{m + 1} }}} \right)\\
&\qquad- \sum_{s = 1}^m {n^{m - s} \binom ms\left( {\alpha ^n \frac{{A_s (\alpha )}}{{\beta ^{s + 1} }} - \beta ^n \frac{{A_s (\beta )}}{{\alpha ^{s + 1} }}} \right)}\\
& =- n^m \frac{(\alpha ^{n + 2}  - \beta ^{n + 2} )}{\alpha\beta} + \frac{{\alpha ^{m + 1} A_m (\alpha ) - \beta ^{m + 1} A_m (\beta )}}{{(\alpha \beta )^{m + 1} }}\\
&\qquad- \sum_{s = 1}^m {n^{m - s} \binom ms\left( {\frac{{\alpha ^{n + s + 1} A_s (\alpha ) - \beta ^{n + s + 1} A_s (\beta )}}{{(\alpha \beta )^{s + 1} }}} \right)}, 
\end{split}
\]
from which, using the Binet formula and identity \eqref{eq.tytwb4a}, we obtain identity \eqref{eq.t8yp45b}. The proof of identity \eqref{eq.skp52tl} proceeds in a similar way; we use
\[
T(m,n) + L_0\delta _{m,0}=Q(\alpha ,m,n) + Q(\beta ,m,n).
\]
\end{proof}
Using~\eqref{eq.faka3lc}, \eqref{eq.qucw3r8}, \eqref{eq.k7m77ev} and \eqref{eq.qjlois5}, the results in Theorem~\ref{thm.gmjtnyz} readily extend to the Brousseau sums $S(m,n,r)$ and $T(m,n,r)$ for non-negative integers $m$ and $n$ and any integer $r$. We have
\begin{equation}\label{eq.yn6pcdo}
\begin{split}
S(m,n,r)=\sum_{k = 1}^n {k^m F_{k + r} }&= -\delta _{m,0}F_r + n^m F_{n + r + 2}  + ( - 1)^{m + 1} \sum_{j = 0}^m {A(m,j)F_{j + m + r + 1} }\\
&\quad- \sum_{s = 1}^m {( - 1)^{s + 1} \binom msn^{m - s} \sum_{j = 1}^s {A(s,j)F_{j + n + s + r + 1} } }, 
\end{split}
\end{equation}

\begin{equation}\label{eq.hzvddni}
\begin{split}
T(m,n,r)=\sum_{k = 1}^n {k^m L_{k + r} }&= -\delta _{m,0}L_r + n^m L_{n + r + 2}  + ( - 1)^{m + 1} \sum_{j = 0}^m {A(m,j)L_{j + m + r + 1} }\\
&\quad- \sum_{s = 1}^m {( - 1)^{s + 1} \binom msn^{m - s} \sum_{j = 1}^s {A(s,j)L_{j + n + s + r + 1} } }. 
\end{split}
\end{equation}
Comparing identities \eqref{eq.xpcud2p} and \eqref{eq.yn6pcdo} and \eqref{eq.ktjpxxp} and \eqref{eq.hzvddni}, using equality of coefficients of equivalent polynomials in $n$, we deduce that the Ledin summation constants $C(m,r)$ and $K(m,r)$, $m\in\mathbb N_0$, $r\in\mathbb Z$, are given by
\begin{equation}\label{eq.t5z8cgr}
C(m,r) =  - \delta _{m,0} F_r  + ( - 1)^{m + 1} \sum_{j = 0}^m {A(m,j)F_{j + m + r + 1} } ,
\end{equation}
\begin{equation}\label{eq.lzeq4vs}
K(m,r) =  - \delta _{m,0} L_r  + ( - 1)^{m + 1} \sum_{j = 0}^m {A(m,j)L_{j + m + r + 1} }. 
\end{equation}
To conclude this section, we now determine the polynomial forms for $P_1(m,n)$ and $P_2(m,n)$.

Setting $r=-n - 1$ and $r=-n$, in turn, in \eqref{eq.xpcud2p}, we find
\begin{equation}\label{eq.a59m297}
P_1 (m,n) = S(m,n, - n - 1) - C(m, - n - 1),
\end{equation}

\begin{equation}\label{eq.dqcoy0r}
P_2 (m,n) = S(m,n, - n) - C(m, - n).
\end{equation}
Thus, from \eqref{eq.yn6pcdo}, \eqref{eq.t5z8cgr}, \eqref{eq.a59m297} and \eqref{eq.dqcoy0r} we obtain
\begin{equation}
P_1 (m,n) = n^m  + \sum\limits_{s = 1}^m {( - 1)^s \binom msn^{m - s} \sum\limits_{j = 1}^s {A(s,j)F_{j + s} } },
\end{equation}

\[
P_2 (m,n) = n^m  + \sum\limits_{s = 1}^m {( - 1)^s \binom msn^{m - s} \sum\limits_{j = 1}^s {A(s,j)F_{j + s + 1} } }.
\]
Thus, polynomial forms of $S(m,n,r)$ and $T(m,n,r)$ for non-negative integers $m$, $n$ and any integer $r$ are
\begin{equation}\label{eq.x7q4h09}
\begin{split}
&S(m,n,r)=\sum_{k=1}^n{k^mF_{k + r}}\\
&=- \delta _{m,0} F_r + \left(n^m  + \sum\limits_{s = 1}^m {( - 1)^{s} \binom msn^{m - s} \sum\limits_{j = 1}^s {A(s,j)F_{j + s} } }\right)F_{n + r}\\
&\qquad+ \left(n^m  + \sum\limits_{s = 1}^m {( - 1)^{s} \binom msn^{m - s} \sum\limits_{j = 1}^s {A(s,j)F_{j + s + 1} } }\right)F_{n + r + 1}\\
&\quad\qquad+ ( - 1)^{m + 1} \sum_{j = 0}^m {A(m,j)F_{j + m + r + 1} },
\end{split}
\end{equation}

\begin{equation}\label{eq.cvt8dsd}
\begin{split}
&T(m,n,r)=\sum_{k=1}^n{k^mL_{k + r}}\\
&=- \delta _{m,0} L_r + \left(n^m  + \sum\limits_{s = 1}^m {( - 1)^{s} \binom msn^{m - s} \sum\limits_{j = 1}^s {A(s,j)F_{j + s} } }\right)L_{n + r}\\
&\qquad+ \left(n^m  + \sum\limits_{s = 1}^m {( - 1)^{s} \binom msn^{m - s} \sum\limits_{j = 1}^s {A(s,j)F_{j + s + 1} } }\right)L_{n + r + 1}\\
&\quad\qquad+ ( - 1)^{m + 1} \sum_{j = 0}^m {A(m,j)L_{j + m + r + 1} }.
\end{split}
\end{equation}
\section{Extension to the Horadam sequence}
We now extend the study of the Ledin and Brousseau summation to the Horadam sequence. First we give the Horadam sequence version of Theorem~\ref{thm.o2yjlhn}, the Ledin form.
\begin{theorem}\label{thm.fnhvpm9}
Let $m$ and $n$ be non-negative integers. Then,
\[
W(m,n;a,b,p,q)=\sum_{k = 1}^n {k^m w_k }  = \mathcal P_1 (m,n;p,q)w_n  + \mathcal P_2 (m,n;p,q)w_{n + 1}  + \mathcal C(m;a,b,p,q);
\]
where
\[
(q - p + 1)\mathcal P_1 (m,n;p,q) = (n + 2)^m q - \sum_{j = 0}^{m - 1} {\binom mj(2^{m - j} q - p)\mathcal P_1 (j,n;p,q)},
\]

\[
(q - p + 1)\mathcal P_2 (m,n;p,q) = -(n + 1)^m - \sum_{j = 0}^{m - 1} {\binom mj(2^{m - j} q - p)\mathcal P_2 (j,n;p,q)},
\]

\[
(q - p + 1)\mathcal C (m;a,b,p,q) = -2^maq + b - \sum_{j = 0}^{m - 1} {\binom mj(2^{m - j} q - p)\mathcal C (j;a,b,p,q)}.
\]
\end{theorem}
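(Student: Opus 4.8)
The plan is to mimic exactly the generating-function/Leibniz argument used in the proofs of Lemma~1 and Theorem~\ref{thm.o2yjlhn}, but now with the roots of the Horadam characteristic polynomial $t^2 - pt + q = 0$ in place of $\alpha,\beta$. Write the two roots as $\gamma,\delta$, so that $\gamma+\delta = p$ and $\gamma\delta = q$, and recall the Binet-type formula $w_k = A\gamma^k + B\delta^k$ for suitable constants $A,B$ determined by $w_0=a$, $w_1=b$. First I would apply the identity~\eqref{eq.idod6y1} to each of $Q(\gamma,m,n)$ and $Q(\delta,m,n)$, form the combination $A\,Q(\gamma,m,n) + B\,Q(\delta,m,n)$, and observe that by the definition~\eqref{eq.bcycobv} this equals $\delta_{m,0}(A+B) + \sum_{k=1}^n k^m w_k = \delta_{m,0}\,w_0 + W(m,n;a,b,p,q)$. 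On the generating-function side, I would combine the two rational functions $\bigl((e^y\gamma)^{n+1}-1\bigr)/(e^y\gamma - 1)$ and $\bigl((e^y\delta)^{n+1}-1\bigr)/(e^y\delta - 1)$ over the common denominator $(e^y\gamma - 1)(e^y\delta - 1) = q e^{2y} - p e^y + 1$, clear brackets, and rewrite all the resulting $\gamma,\delta$-combinations in terms of Horadam numbers. This should yield a closed form
\[
W(m,n;a,b,p,q) = \left.\frac{\partial^m}{\partial y^m}\!\left(\frac{q\,e^{y(n+2)}w_n - e^{y(n+1)}w_{n+1} - \bigl(\text{lower-order $y$-terms}\bigr)}{q e^{2y} - p e^y + 1}\right)\right|_{y=0} - \delta_{m,0}w_0,
\]
from which, reading off the coefficients of $w_n$, $w_{n+1}$ and the constant, we identify
\[
\mathcal P_1(m,n;p,q) = \left.\frac{\partial^m}{\partial y^m}\frac{q\,e^{y(n+2)}}{q e^{2y} - p e^y + 1}\right|_{y=0},\quad
\mathcal P_2(m,n;p,q) = \left.\frac{\partial^m}{\partial y^m}\frac{-\,e^{y(n+1)}}{q e^{2y} - p e^y + 1}\right|_{y=0},
\]
and a similar expression for $\mathcal C$. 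One subtlety here is the initial-condition bookkeeping, i.e.\ checking that the $e^y$ and constant terms in the numerator are exactly what they must be so that the $m=0$ case reduces to the trivial identity; this is where the hypothesis $p\ne q+1$ (equivalently the denominator not vanishing at $y=0$) is used, and where the extra $-\delta_{m,0}w_0$ correction originates.

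Having the three bracketed expressions, the recurrences follow by the Leibniz-rule trick of Theorem~\ref{thm.o2yjlhn}. Set $U(y,n) = q\,e^{y(n+2)}/(q e^{2y} - p e^y + 1)$, so $\mathcal P_1(j,n;p,q) = \partial^j_y U|_{y=0}$, and rewrite as $(q e^{2y} - p e^y + 1)U = q\,e^{y(n+2)}$. Applying Leibniz to the left-hand side, the $\partial^{m-j}_y$ derivative of $q e^{2y} - p e^y + 1$ is $2^{m-j}q\,e^{2y} - p\,e^y$ for $j<m$ and $q e^{2y} - p e^y + 1$ for $j=m$; evaluating at $y=0$ gives
\[
\sum_{j=0}^{m-1}\binom mj\bigl(2^{m-j}q - p\bigr)\mathcal P_1(j,n;p,q) + (q - p + 1)\,\mathcal P_1(m,n;p,q) = (n+2)^m q,
\]
which is the stated recurrence. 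The $\mathcal P_2$ recurrence is identical with $q\,e^{y(n+2)}$ replaced by $-\,e^{y(n+1)}$ on the right, giving $(n+1)^m$ with the opposite sign; and $\mathcal C$ follows either the same way from its own bracket or, more cleanly, from the specialization $\mathcal C(m;a,b,p,q) = W(m,0;a,b,p,q) - \mathcal P_1(m,0;p,q)w_0 - \mathcal P_2(m,0;p,q)w_1 = -\mathcal P_1(m,0;p,q)\,a - \mathcal P_2(m,0;p,q)\,b$ combined with the $\mathcal P_1,\mathcal P_2$ recurrences at $n=0$, exactly as $C(m)$ and $K(m)$ were obtained from $P_1(m,0)$ and $P_2(m,0)$ in the Fibonacci case.

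The main obstacle I anticipate is purely organizational rather than conceptual: correctly collecting the $\gamma,\delta$-terms of degrees $0$ and $1$ in $e^y$ that appear after clearing the common denominator, and verifying they assemble into the right Horadam-number coefficients (this is the analogue of the step in Lemma~1 "upon clearing brackets, re-arranging the terms and using the Binet formulas"). Because $w_k$ is a genuine linear combination $A\gamma^k + B\delta^k$ rather than the cleanly symmetric/antisymmetric forms $F_k,L_k$, one must track $A$ and $B$ through the computation; the cleanest route is probably to do the algebra first for the pair $(U_k)$ and $(V_k)$ (where the Binet forms are as symmetric as in the Fibonacci/Lucas case) and then pass to a general $w_k$ by linearity, since $w_k = \tfrac{1}{?}\bigl((b - \delta a)\gamma^k + (\gamma a - b)\delta^k\bigr)$ is a fixed linear combination of the sequences $(\gamma^k)$ and $(\delta^k)$. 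Once the bracketed closed form is pinned down, everything else is a routine transcription of the Fibonacci argument with $1\mapsto q$, $1\mapsto -p$, $-1\mapsto 1$ in the appropriate spots.
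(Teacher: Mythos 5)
Your proposal is correct and follows essentially the same route as the paper: Binet decomposition $w_k = \mathbb A\tau^k + \mathbb B\sigma^k$, forming $\mathbb A Q(\tau,m,n)+\mathbb B Q(\sigma,m,n)$ over the common denominator $qe^{2y}-pe^y+1$, identifying $\mathcal P_1$, $\mathcal P_2$, $\mathcal C$ as $m$-th derivatives at $y=0$, and extracting the recurrences via the Leibniz rule together with $\mathcal C(m;a,b,p,q)=-a\mathcal P_1(m,0;p,q)-b\mathcal P_2(m,0;p,q)$. The paper's numerator constant term, which you leave as ``lower-order $y$-terms,'' is explicitly $a-e^y(ap-b)$, but since you obtain $\mathcal C$ from the $n=0$ specialization this omission costs nothing.
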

\begin{proof}
It is known that
\[
w_n(a,b;p,q)=\mathbb A\red{(}\tau(p,q),\sigma(p,q),a,b\red{)}\tau(p,q)^n + \mathbb B\red{(}\tau(p,q),\sigma(p,q),a,b\red{)}\sigma(p,q)^n,
\]
or briefly~\cite{horadam65},
\begin{equation}\label{eq.nfmkcaz}
w_n = \mathbb A \tau^n + \mathbb B \sigma^n,
\end{equation}
where $\tau\equiv\tau(p,q)$ and $\sigma\equiv\sigma(p,q)$, $\tau\ne \sigma$, are the roots of the characteristic equation of the Horadam sequence, $x^2=px - q$; so that
\begin{equation}\label{eq.ufpxoag}
\tau=\frac{p+\sqrt{p^2-4q}}2,\quad \sigma=\frac{p-\sqrt{p^2-4q}}2,
\end{equation}
\[
\tau+\sigma=p,\quad\tau-\sigma=\sqrt{p^2-4q}\quad\mbox{and }\tau\sigma=q;
\]
and where $\mathbb A \equiv \mathbb A\red{(}\tau(p,q),\sigma(p,q),a,b\red{)}$ and $\mathbb B\equiv\mathbb B\red{(}\tau(p,q),\sigma(p,q),a,b\red{)}$ are given by
\[
\mathbb A=\frac{{b - a\sigma}}{{\tau  - \sigma }},\quad \mathbb B=\frac{{a\tau  - b}}{{\tau  - \sigma }}.
\]
Note that in the Fibonacci and Lucas cases, $\alpha=\tau(1,-1)$ and $\beta=\sigma(1,-1)$.

From~\eqref{eq.bcycobv} and \eqref{eq.idod6y1}, using~\eqref{eq.nfmkcaz} we find
\[
\begin{split}
&\mathbb AQ(\tau,m,n) + \mathbb BQ(\sigma,m,n)\\
&=\sum_{k = 0}^n {k^m w_k }  =\sum_{k = 1}^n {k^m w_k } +a\,\delta _{m,0}= \left. {\frac{{\partial ^m }}{{\partial y^m }}\frac{{(e^{y(n + 2)} qw_n  - e^{y(n + 1)} w_{n + 1}  + a - e^y (ap - b))}}{{e^{2y} q - e^y p + 1}}} \right|_{y = 0}.
\end{split}
\]
Thus,
\[
W(m,n;a,b,p,q)=\sum_{k = 1}^n {k^m w_k } = \left. {\frac{{\partial ^m }}{{\partial y^m }}\frac{{(e^{y(n + 2)} qw_n  - e^{y(n + 1)} w_{n + 1}  + a - e^y (ap - b))}}{{e^{2y} q - e^y p + 1}}} \right|_{y = 0} - a\,\delta _{m,0};
\]
in which we can identify
\[
\mathcal P_1 (m,n;p,q) = \left. {\frac{{\partial ^m }}{{\partial y^m }}\frac{{e^{y(n + 2)} q}}{{e^{2y} q - e^y p + 1}}} \right|_{y = 0}, 
\]

\[
\mathcal P_2 (m,n;p,q) = \left. {\frac{{\partial ^m }}{{\partial y^m }}\frac{{ - e^{y(n + 1)} }}{{e^{2y} q - e^y p + 1}}} \right|_{y = 0}, 
\]

\[
\mathcal C(m;a,b,p,q) = \left. {\frac{{\partial ^m }}{{\partial y^m }}\frac{{a - e^y (ap - b)}}{{e^{2y} q - e^y p + 1}}} \right|_{y = 0} - a\,\delta _{m,0}. 
\]
The identity stated in the theorem now follows when we perform the indicated differentiations. Observe that $\mathcal C(m;a,b,p,q)$ can be obtained directly from
\[
\mathcal C(m;a,b,p,q) =  - a\mathcal P_1 (m,0;p,q) - b\mathcal P_2 (m,0;p,q).
\]

\end{proof}
Note that
\[
S(m,n) = W(m,n;0,1,1, - 1),
\]

\[
T(m,n) = W(m,n;2,1,1, - 1),
\]

\[
P_1 (m,n) = \mathcal P_1 (m,n;1, - 1),
\]

\[
P_2 (m,n) = \mathcal P_2 (m,n;1, - 1),
\]

\[
C(m) = \mathcal C(m;0,1,1,-1),
\]

\[
K(m) = \mathcal C(m;2,1,1,-1).
\]
Theorem~\ref{thm.fnhvpm9} can be generalized to $W(m,n,r;a,b,p,q) = \sum_{k = 1}^n {k^m w_{k + r} } $ in a straightforward manner. Using \eqref{eq.bcycobv} and \eqref{eq.nfmkcaz}, we have
\begin{equation}\label{eq.qvo5ipe}
\mathbb A\tau ^r Q(\tau ,m,n) + \mathbb B\sigma ^r Q(\sigma ,m,n) = \delta _{m,0} w_r  + \sum_{k = 1}^n {k^m w_{k + r} }.
\end{equation}
But, from \eqref{eq.idod6y1} we obtain
\begin{equation}\label{eq.szdpd2l}
\begin{split}
&\mathbb A\tau ^r Q(\tau ,m,n) + \mathbb B\sigma ^r Q(\sigma ,m,n) \\
&\qquad=\left. \frac{{\partial ^m }}{{\partial y^m }}\left( {\frac{{e^{y(n + 2)} qw_{n + r}  - e^{y(n + 1)} w_{n + r + 1}  - e^y qw_{r - 1}  + w_r }}{{e^{2y} q - e^y p + 1}}} \right)\right|_{y=0}.
\end{split}
\end{equation}
From \eqref{eq.qvo5ipe} and \eqref{eq.szdpd2l}, it folows that
\[
\begin{split}
W(m,n,r;a,b,p,q) &= \sum_{k = 1}^n {k^m w_{k + r} } \\
&= \left.\frac{{\partial ^m }}{{\partial y^m }}\left( {\frac{{e^{y(n + 2)} qw_{n + r}  - e^{y(n + 1)} w_{n + r + 1}  - e^y qw_{r - 1}  + w_r }}{{e^{2y} q - e^y p + 1}}} \right)\right|_{y=0} - \delta _{m,0} w_r,
\end{split}
\]
which can be written as
\begin{equation}\label{eq.ush6th4}
\begin{split}
W(m,n,r;a,b,p,q)&=\sum_{k = 1}^n {k^m w_{k + r} }\\
&  = \mathcal P_1 (m,n;p,q)w_{n + r}  + \mathcal P_2 (m,n;p,q)w_{n + r + 1}  + \mathcal C(m,r;a,b,p,q);
\end{split}
\end{equation}
where $\mathcal P_1$ and $\mathcal P_2$ are as given in Theorem~\ref{thm.fnhvpm9} and $\mathcal C(m,r;a,b,p,q)$ is given by
\[
\mathcal C(m,r;a,b,p,q) = \left.\frac{{\partial ^m }}{{\partial y^m }}\left( {\frac{{ - e^y qw_{r - 1}  + w_r }}{{e^{2y} q - e^y p + 1}}} \right)\right|_{y=0} - \delta _{m,0} w_r,
\]
or more directly by
\[
\mathcal C(m,r;a,b,p,q) =  - w_r P_1 (m,0;p,q) - w_{r + 1} P_2 (m,0;p,q),
\]
yielding
\[
(q - p + 1)\mathcal C (m,r;a,b,p,q) = -2^mqw_r + w_{r+1} - \sum_{j = 0}^{m - 1} {\binom mj(2^{m - j} q - p)\mathcal C (j,r;a,b,p,q)}.
\]
In Theorem~\ref{thm.siyh5ty} we will generalize Theorem~\ref{thm.gmjtnyz} to Horadam sequences with $p=1$.

Let $p=1$ in \eqref{eq.vhrb5b3} so that we have the second order sequence $(w^*_j(a,b;q))=\left(w_j(a,b; 1, q)\right)$ defined by
\[
w^*_0  = a,\,w^*_1  = b;\,w^*_j  = w^*_{j - 1}  - qw^*_{j - 2}\, (j \ge 2);
\]
where $a$, $b$ and $q$ are arbitrary complex numbers, with $q\ne 0$.
We have the Binet formula
\[
w^*_j = \mathbb A \tau^j + \mathbb B \sigma^j,\quad j\in \mathbb Z,
\]
where
\[
\tau=\frac{1+\sqrt{1 - 4q}}2,\quad \sigma=\frac{1 - \sqrt{1 - 4q}}2,
\]
\[
\tau+\sigma=1,\quad\tau-\sigma=\sqrt{1 - 4q}=\Delta\quad\mbox{and }\tau\sigma=q;
\]
and where 
\[
\mathbb A=\frac{{b - a\sigma}}{{\tau  - \sigma }},\quad \mathbb B=\frac{{a\tau  - b}}{{\tau  - \sigma }}.
\]
In particular we have two special Lucas sequences $u_j(q)=(w^*_j(0,1;q))$, \mbox{$v_j(q)=(w^*_j(2,1;q))$}:
\begin{gather}
u_0  = 0,\,u_1  = 1;\,u_j  = u_{j - 1}  - qu_{j - 2}\, (j \ge 2);\\
\nonumber\\
v_0  = 2,\,u_1  = 1;\,v_j  = v_{j - 1}  - qv_{j - 2}\, (j \ge 2);
\end{gather}
so that
\[
u_j=\frac{\tau ^j - \sigma ^j}{\tau - \sigma},\quad v_j=\tau ^j + \sigma ^j.
\]
In Theorem \ref{thm.siyh5ty} we give a closed form for $\Omega (m,n;a,b,q)=\sum_{k=0}^nk^mw^*_k$ but first we state a couple of lemmas.
\begin{lemma}[Adegoke et al. {\cite[Lemma 1]{adegoke21}}]
For integer $j$,
\[
\mathbb A\tau ^j  - \mathbb B\sigma ^j  = \frac{{w_{j + 1}  - qw_{j - 1} }}{\Delta }.
\]
\end{lemma}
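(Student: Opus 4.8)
The plan is to establish the identity by a direct substitution of the Binet formula $w_n = \mathbb A\tau^n + \mathbb B\sigma^n$, which holds for every $n\in\mathbb Z$. First I would expand $w_{j+1} - qw_{j-1}$: writing out both terms and pulling $\tau^{j-1}$ out of the $\mathbb A$-part and $\sigma^{j-1}$ out of the $\mathbb B$-part gives
\[
w_{j+1} - qw_{j-1} = \mathbb A\tau^{j-1}(\tau^2 - q) + \mathbb B\sigma^{j-1}(\sigma^2 - q).
\]

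The crux is then the relation $\tau\sigma = q$ between the roots and the coefficients of the characteristic equation $x^2 = px - q$, recorded just after \eqref{eq.ufpxoag}. It yields
\[
\tau^2 - q = \tau^2 - \tau\sigma = \tau(\tau-\sigma) = \tau\Delta, \qquad \sigma^2 - q = \sigma^2 - \sigma\tau = -\sigma(\tau-\sigma) = -\sigma\Delta,
\]
where $\Delta = \tau - \sigma$. Substituting these back collapses the right-hand side to $w_{j+1} - qw_{j-1} = \Delta(\mathbb A\tau^j - \mathbb B\sigma^j)$, and dividing through by $\Delta\ne 0$ (legitimate because $\tau\ne\sigma$) gives exactly the claimed formula.

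There is essentially no obstacle here: the argument is a two-line computation whose only ingredient is $\tau\sigma = q$, and the identity holds verbatim for negative $j$ since the Binet representation is compatible with the extension of $(w_j)$ to negative indices. If one preferred to avoid the Binet formula altogether, an alternative would be to verify that both sides satisfy the Horadam recurrence in the variable $j$ and agree at $j = 0$ and $j = 1$ (using $w_{-1} = (pw_0 - w_1)/q$ and $w_2 = pw_1 - qw_0$ to reduce the two base cases); but the direct substitution above is shorter and cleaner, so that is the route I would take.
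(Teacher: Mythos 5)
Your computation is correct: expanding $w_{j+1}-qw_{j-1}$ via the Binet form and using $\tau\sigma=q$ to get $\tau^2-q=\tau\Delta$ and $\sigma^2-q=-\sigma\Delta$ collapses everything to $\Delta(\mathbb A\tau^j-\mathbb B\sigma^j)$, exactly as claimed. The paper itself gives no proof (it simply cites Lemma 1 of Adegoke et al.), and your argument is the standard one-line verification that reference uses, so there is nothing further to compare.
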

\begin{lemma}\label{lem.mp76dtn}
Let $s$ be a non-negative integer. Then,
\begin{gather}
\mathbb AA_s (\tau ) + \mathbb BA_s (\sigma ) = \sum_{t = 0}^s {A(s,t)w^* _t },\\
\nonumber\\
\mathbb AA_s (\tau ) - \mathbb BA_s (\sigma ) = \frac{1}{\Delta }\sum_{t = 0}^s {A(s,t)(w^* _{t + 1}  - qw^* _{t - 1} )}.
\end{gather}
\end{lemma}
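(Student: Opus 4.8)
The plan is to derive both identities directly from the Binet-type representations already in hand, using only the linearity of $A_s$ in the monomials $\tau^t,\sigma^t$.

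First I would expand $A_s(\tau)$ and $A_s(\sigma)$ by their definition~\eqref{eq.qjlois5}, namely $A_s(x)=\sum_{t=0}^{s}A(s,t)x^t$, and interchange the order of the (finite) sums to write
\[
\mathbb A A_s(\tau)+\mathbb B A_s(\sigma)=\sum_{t=0}^{s}A(s,t)\bigl(\mathbb A\tau^t+\mathbb B\sigma^t\bigr),\qquad
\mathbb A A_s(\tau)-\mathbb B A_s(\sigma)=\sum_{t=0}^{s}A(s,t)\bigl(\mathbb A\tau^t-\mathbb B\sigma^t\bigr).
\]
For the first identity I would then substitute the Binet formula $w^*_t=\mathbb A\tau^t+\mathbb B\sigma^t$, valid for every integer $t$ and in particular for $0\le t\le s$, which immediately produces $\sum_{t=0}^{s}A(s,t)w^*_t$. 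For the second identity I would substitute instead the lemma of Adegoke et al.\ recalled just above, applied to $(w^*_j)$, in the form $\mathbb A\tau^t-\mathbb B\sigma^t=(w^*_{t+1}-qw^*_{t-1})/\Delta$ for every integer $t$; pulling the constant $1/\Delta$ out of the sum yields the stated closed form. This parallels the proof of Lemma~\ref{lem.bvh0lkh}, but is in fact shorter, since here there is no extra index shift to absorb and no golden-ratio manipulation is needed.

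There is essentially no obstacle. The only small points worth flagging are that $\Delta=\tau-\sigma\neq0$ under the standing hypotheses, so that $1/\Delta$ is meaningful; and that the $t=0$ term of the second sum involves $w^*_{-1}$, which is harmless because $w^*_{-1}$ is well-defined through the backward recurrence $w^*_{-n}=(w^*_{-n+1}-w^*_{-n+2})/q$, and moreover $A(s,0)=0$ for $s\ge1$, so the $t=0$ term contributes nontrivially only in the base case $s=0$, where both identities reduce to the $j=0$ instances of the Binet formula and of the Adegoke et al.\ lemma, respectively.
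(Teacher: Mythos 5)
Your proposal is correct and is essentially identical to the paper's own proof: both expand $A_s$ by its definition, use linearity to pull the Eulerian coefficients out, and then apply the Binet form $w^*_t=\mathbb A\tau^t+\mathbb B\sigma^t$ for the sum and the Adegoke et al.\ lemma $\mathbb A\tau^t-\mathbb B\sigma^t=(w^*_{t+1}-qw^*_{t-1})/\Delta$ for the difference. Your additional remarks on $\Delta\neq0$ and the $t=0$ term are harmless refinements the paper leaves implicit.
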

\begin{proof}
We have
\[
\begin{split}
\mathbb AA_s (\tau ) + \mathbb BA_s (\sigma ) &= \mathbb A\sum_{t = 0}^s {A(s,t)\tau ^t }  + \mathbb B\sum_{t = 0}^s {A(s,t)\sigma ^t }\\ 
 &= \sum_{t = 0}^s {A(s,t)(\mathbb A\tau ^t  + \mathbb B\sigma ^t )}  = \sum_{t = 0}^s {A(s,t)w^* _t } ;
\end{split}
\]

\[
\begin{split}
\mathbb AA_s (\tau ) - \mathbb BA_s (\sigma ) &= \sum_{t = 0}^s {A(s,t)(\mathbb A\tau ^t  - \mathbb B\sigma ^t )}\\
&  = \sum_{t = 0}^s {A(s,t)\frac{w^* _{t + 1} - qw^* _{t - 1}}\Delta } .
\end{split}
\]
\end{proof}
\begin{lemma}\label{lem.x8yix44}
Let $r$ and $s$ be non-negative integers. Then,
\[
\begin{split}
&\mathbb A\tau ^r A_s (\tau ) + \mathbb B\sigma ^r A_s (\sigma )\\
 &= \frac{{v_r }}{2}\sum_{t = 0}^s {A(s,t)w^* _t }  + \frac{{u_r }}{2}\sum_{t = 0}^s {A(s,t)(w^* _{t + 1}  - qw^* _{t - 1} )}. 
\end{split}
\]
\end{lemma}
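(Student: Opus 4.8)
The plan is to imitate the proof of Lemma~\ref{lem.bvh0lkh} almost verbatim, substituting the Lucas-sequence identities for $u_r,v_r$ in place of the Fibonacci/Lucas identities for $F_s,L_s$, and then invoking Lemma~\ref{lem.mp76dtn}. Write $g=\mathbb A\tau^rA_s(\tau)+\mathbb B\sigma^rA_s(\sigma)$. Since $v_r=\tau^r+\sigma^r$ we have $\tau^r=v_r-\sigma^r$; substituting this into the first term gives $g=v_r\,\mathbb A A_s(\tau)-\sigma^r\bigl(\mathbb A A_s(\tau)-\mathbb B A_s(\sigma)\bigr)$. Symmetrically, using $\sigma^r=v_r-\tau^r$ in the second term gives $g=v_r\,\mathbb B A_s(\sigma)+\tau^r\bigl(\mathbb A A_s(\tau)-\mathbb B A_s(\sigma)\bigr)$.

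Adding these two expressions for $g$, the cross-terms combine via $\tau^r-\sigma^r=(\tau-\sigma)u_r=\Delta u_r$, yielding $2g=v_r\bigl(\mathbb A A_s(\tau)+\mathbb B A_s(\sigma)\bigr)+\Delta u_r\bigl(\mathbb A A_s(\tau)-\mathbb B A_s(\sigma)\bigr)$. Now apply Lemma~\ref{lem.mp76dtn}: the first bracket equals $\sum_{t=0}^sA(s,t)w^*_t$ and the second equals $\Delta^{-1}\sum_{t=0}^sA(s,t)(w^*_{t+1}-qw^*_{t-1})$. The factor $\Delta$ cancels in the second term, and dividing through by $2$ gives exactly the stated identity.

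I do not expect a genuine obstacle here; the argument is a routine diagonalization trick already rehearsed twice in the paper. The only points requiring care are bookkeeping: keeping the sign convention in the combination $\mathbb A A_s(\tau)-\mathbb B A_s(\sigma)$ aligned with the second formula of Lemma~\ref{lem.mp76dtn}, and confirming that $v_r=\tau^r+\sigma^r$, $u_r=(\tau^r-\sigma^r)/(\tau-\sigma)$ hold for \emph{all} integers $r$ (including negative $r$), which follows from the Binet formulas for $u_j,v_j$ stated above. Once these are in place the computation is purely formal.
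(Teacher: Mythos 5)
Your proposal is correct and follows exactly the paper's route: the paper's proof consists precisely of the intermediate identity $2g=v_r(\mathbb A A_s(\tau)+\mathbb B A_s(\sigma))+u_r\Delta(\mathbb A A_s(\tau)-\mathbb B A_s(\sigma))$ obtained "as in the proof of Lemma~\ref{lem.bvh0lkh}", followed by an appeal to Lemma~\ref{lem.mp76dtn}, and you have simply written out the two substitutions $\tau^r=v_r-\sigma^r$, $\sigma^r=v_r-\tau^r$ that the paper leaves implicit. The only superfluous point is your worry about negative $r$, since the lemma assumes $r$ non-negative.
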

\begin{proof}
Proceeding as in the proof of Lemma \ref{lem.bvh0lkh}, we establish
\[
\begin{split}
&\mathbb A\tau ^r A_s (\tau ) + \mathbb B\sigma ^r A_s (\sigma )\\
&\quad= \frac{{v_r }}{2}(\mathbb AA_s (\tau ) + \mathbb BA_s (\sigma )) + \frac{{u_r \Delta }}{2}(\mathbb AA_s (\tau ) - \mathbb BA_s (\sigma )),
\end{split}
\]
and hence the stated identity via Lemma \ref{lem.mp76dtn}.
\end{proof}
\begin{theorem}\label{thm.siyh5ty}
Let $m$ and $n$ be non-negative integers. Let $(w^*_j(a,b;q))$ be the second order sequence whose terms are given by $w^*_0  = a,\,w^*_1  = b;\,w^*_j  = w^*_{j - 1}  - qw^*_{j - 2}\, (j \ge 2)$. Let $(u_j(q))=(w^*_j(0,1;q))$, $(v_j(q))=(w^*_j(2,1;q))$. Then,
\[
\begin{split}
&\Omega (m,n;a,b,q) = \sum_{k = 1}^n {k^m w^* _k }\\
&\qquad= -\delta_{m,0}a - n^m \frac{{w^* _{n + 2} }}{q} + \frac{{v_{m + 1} }}{{2q^{m + 1} }}\sum_{j = 0}^m {A(m,j)w^* _j }  + \frac{{u_{m + 1} }}{{2q^{m + 1} }}\sum_{j = 0}^m {A(m,j)(w^* _{j + 1}  - qw^* _{j - 1} )}\\
&\qquad\qquad- \sum_{s = 1}^m {n^{m - s} \binom ms\frac{{v_{n + s + 1} }}{{2q^{s + 1} }}\sum_{j = 1}^s {A(s,j)w^* _j } }  - \sum_{s = 1}^m {n^{m - s} \binom ms\frac{{u_{n + s + 1} }}{{2q^{s + 1} }}\sum_{j = 1}^s {A(s,j)(w^* _{j + 1}  - qw^* _{j - 1} )} }.
\end{split}
\]
\end{theorem}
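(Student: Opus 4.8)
The plan is to run the argument of Theorem~\ref{thm.gmjtnyz} with $\alpha,\beta$ replaced by the roots $\tau,\sigma$ of $x^2=x-q$ and with Lemma~\ref{lem.bvh0lkh} replaced by Lemma~\ref{lem.x8yix44}. Since $w^*_j=\mathbb A\tau^j+\mathbb B\sigma^j$ and $Q(x,m,n)=\sum_{k=0}^nk^mx^k$, specialising~\eqref{eq.qvo5ipe} to $p=1$, $r=0$ gives
\[
\Omega(m,n;a,b,q)+a\,\delta_{m,0}=\sum_{k=0}^nk^mw^*_k=\mathbb AQ(\tau,m,n)+\mathbb BQ(\sigma,m,n),
\]
so it suffices to evaluate the right-hand side. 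Into each of $Q(\tau,m,n)$ and $Q(\sigma,m,n)$ we insert the Hsu--Tan expansion~\eqref{eq.k7m77ev}; this is legitimate because $q\ne0$ forces $\tau\sigma=q\ne0$ and $\tau,\sigma\ne1$ (either $\tau=1$ or $\sigma=1$ would force $q=0$).

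The crux --- and the reason the hypothesis $p=1$ is imposed --- is the pair of identities $1-\tau=\sigma$ and $1-\sigma=\tau$, immediate from $\tau+\sigma=p=1$. Together with $\tau\sigma=q$ they allow every factor $(1-x)^{-k}$ in~\eqref{eq.k7m77ev} (for $x\in\{\tau,\sigma\}$) to be rewritten first as a power of the conjugate root and then as a power of $q$ times a power of $x$; for instance $(1-\tau)^{-(s+1)}=\sigma^{-(s+1)}=\tau^{s+1}/(\tau\sigma)^{s+1}=\tau^{s+1}/q^{s+1}$. Carrying this out termwise, the three groups of terms of~\eqref{eq.k7m77ev} become, respectively,
\[
-\,n^m\,\frac{\mathbb A\tau^{n+2}+\mathbb B\sigma^{n+2}}{q},\qquad \frac{\mathbb A\tau^{m+1}A_m(\tau)+\mathbb B\sigma^{m+1}A_m(\sigma)}{q^{m+1}},
\]
\[
-\sum_{s=1}^m\binom ms n^{m-s}\,\frac{\mathbb A\tau^{n+s+1}A_s(\tau)+\mathbb B\sigma^{n+s+1}A_s(\sigma)}{q^{s+1}}.
\]
The first of these equals $-n^mw^*_{n+2}/q$ by the Binet formula. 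The second is evaluated by Lemma~\ref{lem.x8yix44} with $(r,s)=(m+1,m)$, and each summand of the third by Lemma~\ref{lem.x8yix44} with $(r,s)=(n+s+1,s)$. Since $A(s,0)=0$ for $s\ge1$, the inner sums arising from the third group may be started at $j=1$. Collecting the pieces and subtracting $a\,\delta_{m,0}$ produces exactly the displayed closed form.

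There is no genuine obstacle here; the only real work is the bookkeeping in the second step --- keeping track of the powers of $q$ generated when $(1-\tau)$ and $(1-\sigma)$ are traded for $\sigma$ and $\tau$ and then for $q$, and matching the exponent $r$ in each application of Lemma~\ref{lem.x8yix44}. The degenerate case $\tau=\sigma$ (i.e.\ $q=\tfrac14$), excluded by the Binet formula, is recovered at the end by continuity in $q$: once $w^*_j$, $u_j$, $v_j$ are expressed through their recurrences, the asserted identity is a polynomial identity in $q^{-1}$.
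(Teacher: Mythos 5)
Your proposal is correct and follows essentially the same route as the paper's own proof: both express $\sum_{k=0}^n k^m w^*_k$ as $\mathbb A Q(\tau,m,n)+\mathbb B Q(\sigma,m,n)$, apply the Hsu--Tan expansion \eqref{eq.k7m77ev}, exploit $1-\tau=\sigma$, $1-\sigma=\tau$, $\tau\sigma=q$ to turn the denominators into powers of $q$, and finish with the Binet formula and Lemma~\ref{lem.x8yix44}. Your added checks (that $\tau,\sigma\notin\{0,1\}$, and the continuity argument for $q=\tfrac14$) are sensible refinements the paper leaves implicit.
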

\begin{proof}
From \eqref{eq.k7m77ev} we have
\[
\begin{split}
&\sum_{k=0}^n{k^mw^*_k}=\delta_{m,0}w^*_0 + \sum_{k=1}^n{k^mw^*_k} = \mathbb AQ(\tau ,m,n) + \mathbb BQ(\sigma ,m,n)\\
&= -n^m \left( { \mathbb A \frac{{\tau ^{n + 1} }}{\sigma } + \mathbb B\frac{{\sigma ^{n + 1} }}{\tau }} \right) + \left( {\mathbb A\frac{{A_m (\tau )}}{{\sigma ^{m + 1} }} + \mathbb B \frac{{A_m (\sigma )}}{{\tau ^{m + 1} }}} \right)\\
&\qquad- \sum_{s = 1}^m {n^{m - s} \binom ms\left( {\mathbb A\tau ^n \frac{{A_s (\tau )}}{{\sigma ^{s + 1} }} + \mathbb B \sigma ^n \frac{{A_s (\sigma )}}{{\tau ^{s + 1} }}} \right)}\\
& =- n^m \frac{(\mathbb A\tau ^{n + 2}  + \mathbb B\sigma ^{n + 2} )}{\tau\sigma} + \frac{{\mathbb A\tau ^{m + 1} A_m (\tau ) + \mathbb B\sigma ^{m + 1} A_m (\sigma )}}{{(\tau \sigma )^{m + 1} }}\\
&\qquad- \sum_{s = 1}^m {n^{m - s} \binom ms\left( {\frac{{\mathbb A\tau ^{n + s + 1} A_s (\tau ) + \mathbb B\sigma ^{n + s + 1} A_s (\sigma )}}{{(\tau \sigma )^{s + 1} }}} \right)}, 
\end{split}
\]
from which, using the Binet formula and Lemma \ref{lem.x8yix44}, we obtain the stated identity. 
\end{proof}
In Theorem~\ref{thm.viqym3m} we will specialize the result stated in Theorem~\ref{thm.siyh5ty} to the particular Lucas sequences $(u_j(q))$ and $(v_j(q))$. The following Lemma is required for this purpose.
\begin{lemma}
If $j$ and $s$ are non-negative integers, then,
\begin{gather}
\tau ^s A_j (\tau ) - \sigma ^s A_j (\sigma ) = \Delta \sum_{t = 0}^j {A(j,t)u_{t + s} },\label{eq.w0kqjdb}\\
\tau ^s A_j (\tau ) + \sigma ^s A_j (\sigma ) = \sum_{t = 0}^j {A(j,t)v_{t + s} }\label{eq.j8xowcz}. 
\end{gather}
\end{lemma}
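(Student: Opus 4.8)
The plan is to obtain both identities directly from the definition $A_j(x) = \sum_{t=0}^{j} A(j,t)x^{t}$ of \eqref{eq.qjlois5}, combined with the Binet-type formulas $u_n = (\tau^{n} - \sigma^{n})/(\tau - \sigma)$ and $v_n = \tau^{n} + \sigma^{n}$ recorded immediately above the statement, together with $\tau - \sigma = \Delta$. First I would multiply $A_j(\tau) = \sum_{t=0}^{j} A(j,t)\tau^{t}$ through by $\tau^{s}$ and $A_j(\sigma) = \sum_{t=0}^{j} A(j,t)\sigma^{t}$ through by $\sigma^{s}$, and then form their difference and their sum; linearity of the finite sum over $t$ gives at once
\[
\tau^{s} A_j(\tau) \mp \sigma^{s} A_j(\sigma) = \sum_{t=0}^{j} A(j,t)\bigl(\tau^{s+t} \mp \sigma^{s+t}\bigr).
\]
Since $j$ and $s$ are non-negative, every index $s+t$ occurring here is non-negative, so no appeal to the negative-subscript extension of $(u_j)$ and $(v_j)$ is needed. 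It then remains only to substitute $\tau^{s+t} - \sigma^{s+t} = \Delta\,u_{s+t}$ in the minus case, yielding \eqref{eq.w0kqjdb}, and $\tau^{s+t} + \sigma^{s+t} = v_{s+t}$ in the plus case, yielding \eqref{eq.j8xowcz}.

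This is nothing but the $p = 1$ analogue of Lemma \ref{lem.bvh0lkh}, now phrased for the Lucas sequences $(u_j)$ and $(v_j)$ in place of $(F_n)$ and $(L_n)$. If one prefers an argument in the exact style of Lemma \ref{lem.bvh0lkh} and Lemma \ref{lem.x8yix44}, one may instead write $\tau^{s} = v_s - \sigma^{s}$ (and symmetrically $\sigma^{s} = v_s - \tau^{s}$), add the two resulting expressions, and invoke the identity $u_s v_t + u_t v_s = 2u_{s+t}$; but that detour is superfluous here, since the powers distribute cleanly via $\tau^{s}\tau^{t} = \tau^{s+t}$.

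I do not expect any genuine obstacle: the proof reduces to a single linearity step. The only things to watch are the sign conventions — $\Delta = \tau - \sigma$, so the $u$-identity carries the factor $\Delta$ while the $v$-identity does not — and checking that the Binet forms of $u_j$ and $v_j$ are available in precisely the shape used, which they are, as they are displayed just before the Lemma. Once established, this Lemma can be substituted, exactly as Lemma \ref{lem.x8yix44} was, into the $(u_j)$, $(v_j)$ specialization of Theorem \ref{thm.siyh5ty} that is announced as Theorem \ref{thm.viqym3m}.
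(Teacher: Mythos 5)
Your proof is correct, but it follows a genuinely more direct route than the paper's. The paper disposes of this lemma in one line --- ``the proof parallels that of Lemma~\ref{lem.bvh0lkh}'' --- which means the symmetrization argument: set $f=\tau^{s}A_j(\tau)-\sigma^{s}A_j(\sigma)$, substitute $\tau^{s}=v_s-\sigma^{s}$ and then $\sigma^{s}=v_s-\tau^{s}$ to get two expressions for $f$, add them to obtain $2f=\Delta u_s\bigl(A_j(\tau)+A_j(\sigma)\bigr)+v_s\bigl(A_j(\tau)-A_j(\sigma)\bigr)$, and finish with the addition formula $u_sv_t+u_tv_s=2u_{s+t}$, thereby reducing the shifted case to the $s=0$ evaluations of $A_j(\tau)\pm A_j(\sigma)$. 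You instead distribute $\tau^{s}$ and $\sigma^{s}$ into the polynomial \eqref{eq.qjlois5} and apply the Binet forms $\tau^{s+t}-\sigma^{s+t}=\Delta u_{s+t}$ and $\tau^{s+t}+\sigma^{s+t}=v_{s+t}$ termwise; as you note, $s+t\ge 0$ so no negative-subscript conventions are needed (and since the Binet forms hold for all integer indices, nothing would be lost even if $s$ were allowed to be negative, as it is in Lemma~\ref{lem.bvh0lkh}). Your argument is shorter and self-contained; the paper's symmetrization device earns its keep mainly in settings like Lemma~\ref{lem.x8yix44}, where the Binet coefficients $\mathbb A$, $\mathbb B$ are not plain powers of $\tau$, $\sigma$ and one wants the answer organized around $u_r$, $v_r$ times sums with small subscripts rather than around shifted subscripts. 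Both routes are valid proofs of \eqref{eq.w0kqjdb} and \eqref{eq.j8xowcz}.
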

\begin{proof}
The proof parallels that of Lemma \ref{lem.bvh0lkh}.
\end{proof}
\begin{theorem}\label{thm.viqym3m}
If $m$ and $n$ are non-negative integers, then,
\begin{equation}\label{eq.rswhtii}
\begin{split}
\Omega(m,n;0,1,q) &= \sum_{k = 1}^n {k^m u_k }\\ 
 &= -\delta_{m,0}u_0 - n^m \frac{u_{n + 2}}q  + \frac1{q^{m + 1}} \sum_{j = 0}^m {A(m,j)u_{j + m + 1} }\\
&\quad- \sum_{s = 1}^m {\frac1{q^{s + 1}} \binom msn^{m - s} \sum_{j = 1}^s {A(s,j)u_{j + n + s + 1} } }, 
\end{split}
\end{equation}

\begin{equation}\label{eq.ms4d9a5}
\begin{split}
\Omega(m,n;2,1,q) &= \sum_{k = 1}^n {k^m v_k }\\ 
 &=-\delta_{m,0}v_0 - n^m \frac{v_{n + 2}}q  + \frac1{q^{m + 1}} \sum_{j = 0}^m {A(m,j)v_{j + m + 1} }\\
&\quad- \sum_{s = 1}^m {\frac1{q^{s + 1}} \binom msn^{m - s} \sum_{j = 1}^s {A(s,j)v_{j + n + s + 1} } }, 
\end{split}
\end{equation}
where $A(i,j)$ are Eulerian numbers defined in \eqref{eq.pr9b061}.
\end{theorem}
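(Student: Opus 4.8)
The plan is to specialize Theorem~\ref{thm.siyh5ty} to the two Lucas sequences $w^*_j=u_j$ (the case $a=0$, $b=1$) and $w^*_j=v_j$ (the case $a=2$, $b=1$), exploiting the fact that for these choices the Binet coefficients collapse: since $\tau+\sigma=1$, one checks directly that $\mathbb A=1/\Delta$, $\mathbb B=-1/\Delta$ for $(u_j)$ and $\mathbb A=\mathbb B=1$ for $(v_j)$. Concretely I would re-run the proof of Theorem~\ref{thm.siyh5ty}, stopping just before Lemma~\ref{lem.x8yix44} is invoked, at the point where one has, for an arbitrary Horadam sequence,
\begin{multline*}
\sum_{k=0}^n k^m w^*_k
= -\,n^m\,\frac{\mathbb A\tau^{n+2}+\mathbb B\sigma^{n+2}}{\tau\sigma}
+\frac{\mathbb A\tau^{m+1}A_m(\tau)+\mathbb B\sigma^{m+1}A_m(\sigma)}{(\tau\sigma)^{m+1}}\\
-\sum_{s=1}^m n^{m-s}\binom ms\,\frac{\mathbb A\tau^{n+s+1}A_s(\tau)+\mathbb B\sigma^{n+s+1}A_s(\sigma)}{(\tau\sigma)^{s+1}},
\end{multline*}
and then substitute each of the two special pairs $(\mathbb A,\mathbb B)$ in turn.

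For $w^*=u$ every bracket $\mathbb A\tau^r A_s(\tau)+\mathbb B\sigma^r A_s(\sigma)$ becomes $\bigl(\tau^r A_s(\tau)-\sigma^r A_s(\sigma)\bigr)/\Delta$, which by~\eqref{eq.w0kqjdb} equals $\sum_{t=0}^s A(s,t)u_{t+r}$; likewise $\mathbb A\tau^{n+2}+\mathbb B\sigma^{n+2}=(\tau^{n+2}-\sigma^{n+2})/\Delta=u_{n+2}$, while $\tau\sigma=q$ turns each $(\tau\sigma)^{s+1}$ into $q^{s+1}$. Using $A(s,0)=0$ for $s\ge1$ (so that $\sum_{t=0}^s$ collapses to $\sum_{t=1}^s$ in the tail sum, matching the theorem) and $\sum_{k=1}^n k^m u_k=\sum_{k=0}^n k^m u_k-\delta_{m,0}u_0$ with $u_0=0$ (the $\delta$-term being retained only for uniformity with the $v$-case), I obtain~\eqref{eq.rswhtii}. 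The computation for $w^*=v$ is identical with~\eqref{eq.j8xowcz} replacing~\eqref{eq.w0kqjdb}: now the brackets are $\tau^r A_s(\tau)+\sigma^r A_s(\sigma)=\sum_{t=0}^s A(s,t)v_{t+r}$, one has $\mathbb A\tau^{n+2}+\mathbb B\sigma^{n+2}=v_{n+2}$, and the correction term is $-\delta_{m,0}v_0=-2\delta_{m,0}$; this yields~\eqref{eq.ms4d9a5}.

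There is no serious obstacle here; the only thing that requires care is the index bookkeeping — checking that the three exponent shifts ($n+2$ in the polynomial term, $m+1$ in the constant term, $n+s+1$ in the tail) are carried by~\eqref{eq.w0kqjdb}/\eqref{eq.j8xowcz} into exactly the subscripts $j+m+1$ and $j+n+s+1$ appearing in the theorem, and that the powers of $q$ in the denominators come out as $q^{m+1}$ and $q^{s+1}$. As an alternative that does not cite Theorem~\ref{thm.siyh5ty} at all, one could apply the Hsu--Tan formula~\eqref{eq.k7m77ev} directly at $x=\tau$ and at $x=\sigma$, form $\bigl(Q(\tau,m,n)-Q(\sigma,m,n)\bigr)/\Delta$ for $(u_j)$ and $Q(\tau,m,n)+Q(\sigma,m,n)$ for $(v_j)$, and simplify using $1-\tau=\sigma$, $1-\sigma=\tau$, $\tau\sigma=q$; this is essentially word-for-word the proof of Theorem~\ref{thm.gmjtnyz} with $(\alpha,\beta,F,L)$ replaced by $(\tau,\sigma,u,v)$ and $\alpha\beta=-1$ by $\tau\sigma=q$, and it reproduces the same two identities.
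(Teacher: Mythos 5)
Your proposal is correct and matches the paper's treatment: the paper likewise observes that the theorem is a corollary of Theorem~\ref{thm.siyh5ty} but proves it directly by applying the Hsu--Tan formula at $x=\tau$ and $x=\sigma$, forming $Q(\tau,m,n)\mp Q(\sigma,m,n)$ and invoking \eqref{eq.w0kqjdb} and \eqref{eq.j8xowcz} --- which is precisely your ``alternative'' route, and your main route (collapsing $\mathbb A,\mathbb B$ to $\pm1/\Delta$ and $1$) is the same computation up to a factor of $\Delta$. The index bookkeeping and the remark that $A(s,0)=0$ for $s\ge1$ truncates the inner sums are all handled correctly.
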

\begin{proof}
Observe that Theorem \ref{thm.viqym3m} is a corollary to Theorem \ref{thm.siyh5ty}. However, it is easier to prove the identities directly. We have
\[
\begin{split}
&\Delta\sum_{k=0}^n{k^mu_k}=\Delta\delta_{m,0}u_0 + \Delta\sum_{k=1}^n{k^mu_k}=Q(\tau ,m,n) - Q(\sigma ,m,n)\\
& =- n^m \frac{(\tau ^{n + 2}  - \sigma ^{n + 2} )}{\tau\sigma} + \frac{{\tau ^{m + 1} A_m (\tau ) - \sigma ^{m + 1} A_m (\sigma )}}{{(\tau \sigma )^{m + 1} }}\\
&\qquad- \sum_{s = 1}^m {n^{m - s} \binom ms\left( {\frac{{\tau ^{n + s + 1} A_s (\tau ) - \sigma ^{n + s + 1} A_s (\sigma )}}{{(\tau \sigma )^{s + 1} }}} \right)}, 
\end{split}
\]
from which, using~\eqref{eq.w0kqjdb}, identity~\eqref{eq.rswhtii} follows. The proof of \eqref{eq.ms4d9a5} is similar. We use
\[
\sum_{k=0}^n{k^mv_k}=\delta_{m,0}v_0 + \sum_{k=1}^n{k^mv_k}=Q(\tau ,m,n) + Q(\sigma ,m,n).
\]
\end{proof}
Next, in Theorem~\eqref{thm.adtdjh1}, we provide a generalization of Theorem~\ref{thm.siyh5ty} to the evaluation of $\mathcal W(m,n,h,r;a,b,p,q) = \sum_{k = 1}^n {V_h^{- k}k^m w_{hk + r}}$, where $(V_j(p,q))=(w_j(2,p;p,q))$ is the Lucas sequence of the second kind.

The numbers $V_j$ are given explicitly by
\[
V_j(p,q)=\tau(p,q)^j + \sigma(p,q)^j ,
\]
where $\tau(p,q)$ and $\sigma(p,q)$ are as given in \eqref{eq.ufpxoag}.
\begin{theorem}\label{thm.adtdjh1}
Let $m$, $n$, $h$ be non-negative integers and $r$ any integer. Then,
\[
\begin{split}
\mathcal W(m,n,r,h;a,b,p,q) &= \sum_{k = 1}^n {\frac{{k^m w_{hk + r} }}{{V_h^k }}}\\
&=  - w_r \delta _{m,0}  - \frac{{n^m w_{h(n + 2) + r} }}{{q^h V_h^n }} + \left( {\frac{{V_h }}{{q^h }}} \right)^{m + 1} \sum_{j = 0}^m {A(m,j)\frac{{w_{h(j + m + 1) + r} }}{{V_h^j }}}\\ 
&\qquad - \sum_{s = 1}^m {\binom ms\frac{{n^{m - s} }}{{q^{h(s + 1)} }}\sum_{j = 1}^s { A(s,j)\frac{w_{h(j + n + s + 1) + r} }{V_h^{j + n - s - 1}}} } .
\end{split}
\]
\end{theorem}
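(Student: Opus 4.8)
\section*{Proof proposal}

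The plan is to reduce the claim to the Hsu--Tan formula \eqref{eq.k7m77ev} evaluated at the two points $x=\tau^h/V_h$ and $x=\sigma^h/V_h$, where $\tau=\tau(p,q)$ and $\sigma=\sigma(p,q)$ are the characteristic roots. The starting point is the Binet formula \eqref{eq.nfmkcaz}: since $V_h=\tau^h+\sigma^h$,
\[
\frac{w_{hk+r}}{V_h^k}=\mathbb A\tau^r\left(\frac{\tau^h}{V_h}\right)^{\!k}+\mathbb B\sigma^r\left(\frac{\sigma^h}{V_h}\right)^{\!k},
\]
so that, peeling off the $k=0$ term (which contributes $w_r$ only when $m=0$, as $0^m=0$ otherwise),
\[
\mathcal W(m,n,r,h;a,b,p,q)+w_r\delta_{m,0}=\mathbb A\tau^r\,Q\!\left(\frac{\tau^h}{V_h},m,n\right)+\mathbb B\sigma^r\,Q\!\left(\frac{\sigma^h}{V_h},m,n\right).
\]
Both arguments are admissible for \eqref{eq.k7m77ev}: they are nonzero since $\tau,\sigma\ne0$ (because $q=\tau\sigma\ne0$), and neither equals $1$ since $\tau^h/V_h=1$ would force $\sigma^h=0$; one also needs $V_h\ne0$, which is implicit in the statement.

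Next I would substitute \eqref{eq.k7m77ev} into each $Q$-term and simplify using two elementary facts: first, $1-\tau^h/V_h=\sigma^h/V_h$ and $1-\sigma^h/V_h=\tau^h/V_h$ (immediate from $V_h=\tau^h+\sigma^h$); second, $\tau^h\sigma^h=(\tau\sigma)^h=q^h$, equivalently $1/\sigma^h=\tau^h/q^h$ and $1/\tau^h=\sigma^h/q^h$. The first fact turns every denominator $(1-x)^{s+1}$ into $V_h^{s+1}\sigma^{-h(s+1)}$ in the $\tau$-term and $V_h^{s+1}\tau^{-h(s+1)}$ in the $\sigma$-term; the second converts the leftover negative powers of $\sigma$ (resp.\ $\tau$) into positive powers of $\tau$ (resp.\ $\sigma$), at the cost of a power of $q^h$. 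Each matched pair then recombines, via \eqref{eq.nfmkcaz} (valid for all integer indices), into a single Horadam value. Concretely: $-n^m x^{n+1}/(1-x)$ produces $-n^m w_{h(n+2)+r}/(q^hV_h^n)$; the term $A_m(x)/(1-x)^{m+1}$ with $A_m(x)=\sum_{j=0}^mA(m,j)x^j$ produces $(V_h/q^h)^{m+1}\sum_{j=0}^mA(m,j)w_{h(j+m+1)+r}/V_h^{\,j}$; and the $s$-th summand $x^n\binom ms n^{m-s}A_s(x)/(1-x)^{s+1}$ produces $\binom ms n^{m-s}q^{-h(s+1)}\sum_{j=1}^sA(s,j)w_{h(j+n+s+1)+r}/V_h^{\,j+n-s-1}$, the inner sum starting at $j=1$ because $A(s,0)=0$ for $s\ge1$. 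Collecting these three contributions and transposing $w_r\delta_{m,0}$ yields exactly the stated identity.

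No step is genuinely hard; the main pitfall is exponent bookkeeping, in particular checking that the $V_h^{-k}$ weight, the $x^n$ and $x^{\,j}$ numerators, and the $(1-x)^{-(s+1)}$ denominators combine to leave $V_h$ to the power $n+j-(s+1)$ in the last sum, matching the claimed $V_h^{\,j+n-s-1}$. Besides that, the only things to watch are the admissibility of the evaluation points (which tacitly needs $V_h\ne0$), the $k=0$ correction, and the Eulerian-number index ranges. As an aside, $\mathcal W$ could also be extracted from Theorem~\ref{thm.siyh5ty} applied to the sequence $k\mapsto w_{hk+r}/V_h^k$, whose characteristic roots sum to $1$ and have product $q^h/V_h^2$; but translating that theorem's $u$- and $v$-data back into $w$'s and $V_h$'s is no shorter than the direct computation above, so I would present the direct route.
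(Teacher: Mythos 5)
Your proposal is correct and follows essentially the same route as the paper: apply the Binet form to rewrite $\mathcal W + w_r\delta_{m,0}$ as $\mathbb A\tau^rQ(\tau^h/V_h,m,n)+\mathbb B\sigma^rQ(\sigma^h/V_h,m,n)$, then substitute the Hsu--Tan formula \eqref{eq.k7m77ev} and recombine the paired terms into Horadam values using $1-\tau^h/V_h=\sigma^h/V_h$ and $\tau^h\sigma^h=q^h$. The paper leaves this last simplification implicit, so your explicit exponent bookkeeping (and the remark that $V_h\ne 0$ is tacitly required) only fills in detail rather than changing the argument.
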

\begin{proof}
From \eqref{eq.bcycobv} and \eqref{eq.nfmkcaz} we have
\begin{equation}\label{eq.f8hb60p}
\begin{split}
&\mathbb A\tau ^r Q(\tau ^h /V_h ,m,n) + \mathbb B\sigma ^r Q(\sigma ^h /V_h ,m,n)\\
&\qquad= \sum_{k = 0}^n {k^m V_h^{ - k} (\mathbb A\tau ^{hk + r}  + \mathbb B\sigma ^{hk + r} )}\\ 
&\qquad= \sum_{k = 0}^n {k^m V_h^{ - k} w_{hk + r} }  = \delta _{m,0} w_r  + \sum_{k = 1}^n {k^m V_h^{ - k} w_{hk + r} }.
\end{split}
\end{equation}
From~\eqref{eq.k7m77ev}, using \eqref{eq.qjlois5} and \eqref{eq.nfmkcaz}, we find
\begin{equation}\label{eq.v1sjrfv}
\begin{split}
&\mathbb A\tau ^r Q(\tau ^h /V_h ,m,n) + \mathbb B\sigma ^r Q(\sigma ^h /V_h ,m,n)\\
&= - \frac{{n^m w_{h(n + 2) + r} }}{{q^h V_h^n }} + \sum_{j = 0}^m {\frac{{V_h^{m + 1 - j} }}{{q^{h(m + 1)} }}A(m,j)w_{h(j + m + 1) + r} }\\ 
&\qquad - \sum_{s = 1}^m {\binom ms\frac{{n^{m - s} }}{{q^{h(s + 1)} }}\sum_{j = 1}^s { A(s,j)\frac{w_{h(j + n + s + 1) + r} }{V_h^{j + n - s - 1}}} } .
\end{split}
\end{equation}
Equating \eqref{eq.f8hb60p} and \eqref{eq.v1sjrfv} we obtain the stated expression for $\mathcal W(m,n,r,h;a,b,p,q)$.
\end{proof}
Note that $\Omega (m,n;a,b,q)\equiv \mathcal W(m,n,0,1;a,b,1,q)$.

Comparing equivalent polynomials in $n$ in W(m,n,r;a,b,1,q), identity~\eqref{eq.ush6th4}, and in\\ $\mathcal W(m,n,r,1;a,b,1,q)$ we find the Ledin constant with the restricted Horadam sequence $(w_j(a,b,1,q))$ to be
\begin{equation}\label{eq.hffekpf}
\mathcal C(m,r;a,b,1,q) =  - w_r^* \delta _{m,0}  + \frac{1}{{q^{m + 1} }}\sum\limits_{j = 0}^m {A(m,j)w_{j + m + 1 + r}^* },
\end{equation}
of which \eqref{eq.t5z8cgr} and \eqref{eq.lzeq4vs} are particular cases.

We can derive a Ledin form for the $w_j^*$ sequence by using $r=- n - 1$, $r=-n$, in turn in identity~\eqref{eq.ush6th4}, written for the special Lucas sequence $(u_j(q))=(U_j(1,q))$ to obtain
\[
 - \frac{1}{q}\mathcal P_1 (m,n;1,q) = \sum\limits_{k = 1}^n {k^m u_{k - n - 1} }  - \mathcal C(m, - n - 1;0,1,1,q)
\]

\[
\mathcal P_2 (m,n;1,q) = \sum\limits_{k = 1}^n {k^m u_{k - n} }  - \mathcal C(m, - n;0,1,1,q)
\]
from which with $p=1$, $h=1$ in the identity of Theorem~\ref{thm.adtdjh1} (that is $\mathcal W(m,n,r,1;a,b,1,q)$) we get
\begin{equation}\label{eq.gs19lfx}
\mathcal P_1 (m,n;1,q) = n^m  + q\sum\limits_{s = 1}^m {\binom ms\frac{{n^{m - s} }}{{q^{s + 1} }}\sum\limits_{j = 1}^s {A(s,j)u_{j + s} } },
\end{equation}

\begin{equation}\label{eq.wzziec2}
\mathcal P_2 (m,n;1,q) =  - \frac{{n^m }}{q} - \sum\limits_{s = 1}^m {\binom ms\frac{{n^{m - s} }}{{q^{s + 1} }}\sum\limits_{j = 1}^s {A(s,j)u_{j + s + 1} } }.
\end{equation}
Thus, using \eqref{eq.hffekpf}, \eqref{eq.gs19lfx} and \eqref{eq.wzziec2} in \eqref{eq.ush6th4} gives
\begin{equation}\label{eq.ldtcggu}
\begin{split}
\sum_{k=1}^n{k^mw_{k + r}^*}&=-w_r^*\,\delta_{m,0}+\left(n^m  + q\sum\limits_{s = 1}^m {\binom ms\frac{{n^{m - s} }}{{q^{s + 1} }}\sum\limits_{j = 1}^s {A(s,j)u_{j + s} } }\right)w_{n + r}^*\\
&\qquad -\left(\frac{{n^m }}{q} + \sum\limits_{s = 1}^m {\binom ms\frac{{n^{m - s} }}{{q^{s + 1} }}\sum\limits_{j = 1}^s {A(s,j)u_{j + s + 1} } }\right)w_{n + r + 1}^*\\
&\quad\qquad + \frac{1}{{q^{m + 1} }}\sum\limits_{j = 0}^m {A(m,j)w_{j + m + 1 + r}^* }.
\end{split}
\end{equation}
Note that $S(m,n,r)$ and $T(m,n,r)$ given in \eqref{eq.x7q4h09} and \eqref{eq.cvt8dsd} are special cases of \eqref{eq.ldtcggu}.
 
Our final result is a generalization of Theorem~\ref{thm.siyh5ty} to Horadam sequences with indices in arithmetic progression.
\begin{theorem}
Let $m$, $n$, $h$ be non-negative integers and $r$ any integer. Then,
\[
\begin{split}
&W(m,n,r,h;a,b,p,q)=\sum_{k = 1}^n {k^m w_{hk + r} }\\
&  =  - \delta _{m,0} w_r  - n^m \left( {\frac{{w_{h(n + 1) + r}  - q^h w_{hn + r} }}{{1 - V_h  + q^h }}} \right)\\
&\qquad + \frac1{(1 - V_h  + q^h )^{m + 1} }\sum_{c = 0}^{m + 1} {( - 1)^c \binom{m + 1}cq^{hc} \sum_{j = 0}^m {A(m,j)w_{h(j - c) + r} } } \\
&\qquad\quad- \sum_{s = 1}^m {\binom ms\frac{{n^{m - s} }}{{(1 - V_h  + q^h )^{s + 1} }}\sum_{c = 0}^{s + 1} {( - 1)^c \binom{s + 1}cq^{hc} \sum_{j = 0}^s {A(s,j)w_{h(j - c + n) + r} } } } .
\end{split}
\]
\end{theorem}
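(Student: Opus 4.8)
The plan is to run the same computation that underlies the proof of Theorem~\ref{thm.adtdjh1}, but with the weight $V_h^{-k}$ deleted, so that the relevant power sums are $Q(\tau^h,m,n)$ and $Q(\sigma^h,m,n)$ rather than $Q(\tau^h/V_h,m,n)$ and $Q(\sigma^h/V_h,m,n)$. First I would use the Binet form~\eqref{eq.nfmkcaz}, $w_{hk+r}=\mathbb A\tau^{hk+r}+\mathbb B\sigma^{hk+r}$, together with the definition~\eqref{eq.bcycobv} of $Q$, to write
\[
\sum_{k=1}^n k^m w_{hk+r}=\mathbb A\tau^r Q(\tau^h,m,n)+\mathbb B\sigma^r Q(\sigma^h,m,n)-\delta_{m,0}w_r,
\]
where the term $-\delta_{m,0}w_r$ accounts for the missing $k=0$ summand in~\eqref{eq.bcycobv}.

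Next I would substitute the Hsu--Tan expansion~\eqref{eq.k7m77ev} for each of $Q(\tau^h,m,n)$ and $Q(\sigma^h,m,n)$ and recombine the resulting pieces term by term over a common denominator. Two elementary observations drive the algebra. The first is
\[
(1-\tau^h)(1-\sigma^h)=1-(\tau^h+\sigma^h)+(\tau\sigma)^h=1-V_h+q^h,
\]
so that clearing the denominators $(1-\tau^h)^{s+1}$ and $(1-\sigma^h)^{s+1}$ produces the common factor $(1-V_h+q^h)^{s+1}$ appearing in the statement. The second is the index-shift relation $\tau^{hj}\sigma^{hc}=(\tau\sigma)^{hc}\tau^{h(j-c)}=q^{hc}\tau^{h(j-c)}$ (and symmetrically with $\tau$ and $\sigma$ exchanged), which, after recombining via~\eqref{eq.nfmkcaz}, converts $\mathbb A\tau^{hj+r}\sigma^{hc}+\mathbb B\sigma^{hj+r}\tau^{hc}$ into $q^{hc}w_{h(j-c)+r}$. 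Expanding $A_m(\tau^h)=\sum_{j=0}^m A(m,j)\tau^{hj}$ by~\eqref{eq.qjlois5} and $(1-\sigma^h)^{m+1}=\sum_{c=0}^{m+1}\binom{m+1}{c}(-1)^c\sigma^{hc}$ by the binomial theorem, the $A_m$-term of~\eqref{eq.k7m77ev} then collapses to $\frac{1}{(1-V_h+q^h)^{m+1}}\sum_{c=0}^{m+1}(-1)^c\binom{m+1}{c}q^{hc}\sum_{j=0}^m A(m,j)w_{h(j-c)+r}$; the $n^m$-term collapses to $-n^m(w_{h(n+1)+r}-q^h w_{hn+r})/(1-V_h+q^h)$ upon using $\tau^{h(n+1)}\sigma^h=q^h\tau^{hn}$; and each summand of the $\sum_{s=1}^m$-term collapses identically, carrying an extra $\tau^{hn}$ (resp.\ $\sigma^{hn}$) that merely shifts the subscript of $w$ by $hn$.

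Assembling these three collapsed pieces and restoring the $-\delta_{m,0}w_r$ term yields the claimed identity. All the steps are routine algebra; the only real care needed is the bookkeeping of the three nested layers of summation (over $s$, over $c$, and over $j$) while merging the $\tau$- and $\sigma$-halves, together with the tacit hypothesis $1-V_h+q^h\ne 0$, i.e.\ $\tau^h\ne 1$ and $\sigma^h\ne 1$ --- precisely the condition under which~\eqref{eq.k7m77ev} may be applied with $x=\tau^h$ and with $x=\sigma^h$. I expect this organizational bookkeeping to be the main (and essentially only) obstacle.
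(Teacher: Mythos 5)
Your proposal is correct and follows essentially the same route as the paper: express the sum as $\mathbb A\tau^r Q(\tau^h,m,n)+\mathbb B\sigma^r Q(\sigma^h,m,n)-\delta_{m,0}w_r$ via the Binet form, apply the Hsu--Tan formula \eqref{eq.k7m77ev} with $x=\tau^h$ and $x=\sigma^h$, and recombine using $(1-\tau^h)(1-\sigma^h)=1-V_h+q^h$, the binomial theorem, and \eqref{eq.nfmkcaz}. Your explicit bookkeeping of the three collapsed pieces, and the remark that $1-V_h+q^h\ne 0$ is tacitly required, merely spell out what the paper's proof leaves as ``routine algebra.''
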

\begin{proof}
From \eqref{eq.bcycobv} and \eqref{eq.nfmkcaz} we have
\begin{equation}\label{eq.blxcboh}
\begin{split}
&\mathbb A\tau ^r Q(\tau ^h ,m,n) + \mathbb B\sigma ^r Q(\sigma ^h ,m,n)\\
&\qquad= \sum_{k = 0}^n {k^m (\mathbb A\tau ^{hk + r}  + \mathbb B\sigma ^{hk + r} )}\\ 
&\qquad= \sum_{k = 0}^n {k^m w_{hk + r} }  = \delta _{m,0} w_r  + \sum_{k = 1}^n {k^m w_{hk + r} }.
\end{split}
\end{equation}
Using \eqref{eq.k7m77ev} directly, we find
\begin{equation}\label{eq.iy1fean}
\begin{split}
&\mathbb A\tau ^r Q(\tau ^h ,m,n) + \mathbb B\sigma ^r Q(\sigma ^h ,m,n)\\
&=  - n^m \left( {\mathbb A\frac{{\tau ^{h(n + 1) + r} }}{{1 - \tau ^h }} + \mathbb B\frac{{\sigma ^{h(n + 1) + r} }}{{1 - \sigma ^h }}} \right) + \mathbb A\frac{{A_m (\tau ^h )\tau ^r }}{{(1 - \tau ^h )^{m + 1} }} + \mathbb B\frac{{A_m (\sigma ^h )\sigma ^r }}{{(1 - \sigma ^h )^{m + 1} }}\\
&\quad - \sum_{s = 1}^m {\binom msn^{m - s} \left( {\mathbb A\frac{{A_s (\tau ^h )\tau ^{hn + r} }}{{(1 - \tau ^h )^{s + 1} }} + \mathbb B\frac{{A_s (\sigma ^h )\sigma ^{hn + r} }}{{(1 - \sigma ^h )^{s + 1} }}} \right)}. 
\end{split}
\end{equation}
Using \eqref{eq.qjlois5}, \eqref{eq.nfmkcaz} and the binomial theorem to express the right hand side of \eqref{eq.iy1fean} in terms of $w_j$, $V_j$ and the Eulerian numbers and equating the resulting expression with the right hand side of \eqref{eq.blxcboh}, we obtain the stated result.
\end{proof}
Comparing coefficients of equivalent polynomials in $n$ in W(m,n,r;a,b,p,q), identity~\eqref{eq.ush6th4}, and in $W(m,n,r,1;a,b,p,q)$ gives the Ledin constant with the full Horadam sequence $(w_j(a,b,p,q))$ as
\begin{equation}\label{eq.idsbz8v}
\mathcal C(m,r;a,b,p,q)=- w_r\delta _{m,0} + \frac1{(1 - p  + q )^{m + 1} }\sum_{c = 0}^{m + 1} {( - 1)^c \binom{m + 1}cq^c \sum_{j = 0}^m {A(m,j)w_{j - c + r} } }.
\end{equation}
We now proceed to establish the Ledin form for the Horadam sequence by determining the polynomials $P_1(m,n;p,q)$, $P_2(m,n;p,q)$.

We write \eqref{eq.ush6th4} for the Lucas sequence of the first kind, namely,
\begin{equation}\label{eq.co5mc67}
\begin{split}
W(m,n,r;0,1,p,q)&=\sum_{k = 1}^n {k^m U_{k + r} }\\
&  = \mathcal P_1 (m,n;p,q)U_{n + r}  + \mathcal P_2 (m,n;p,q)U_{n + r + 1}  + \mathcal C(m,r;0,1,p,q);
\end{split}
\end{equation}
Setting $r=-n - 1$, $r=-n$, in turn, in~\eqref{eq.co5mc67} we have
\[
 - \frac{1}{q}\mathcal P_1 (m,n;p,q) = \sum\limits_{k = 1}^n {k^m U_{k - n - 1} }  - \mathcal C(m, - n - 1;0,1,p,q),
\]

\[
\mathcal P_2 (m,n;p,q) = \sum\limits_{k = 1}^n {k^m U_{k - n} }  - \mathcal C(m, - n;0,1,p,q),
\]
from which, using $W(m,n,r,1;a,b,p,q)$, we get
\begin{equation}\label{eq.zesuvtj}
\mathcal P_1 (m,n;p,q)=\frac{n^mq}{1 - p + q} + q\sum_{s = 1}^m {\binom ms\frac{{n^{m - s} }}{{(1 - p  + q )^{s + 1} }}\sum_{c = 0}^{s + 1} {( - 1)^c \binom{s + 1}cq^{c} \sum_{j = 0}^s {A(s,j)U_{j - c - 1} } } },
\end{equation}

\begin{equation}\label{eq.mklyh6b}
\mathcal P_2 (m,n;p,q)=-\frac{n^m}{1 - p + q} - \sum_{s = 1}^m {\binom ms\frac{{n^{m - s} }}{{(1 - p  + q )^{s + 1} }}\sum_{c = 0}^{s + 1} {( - 1)^c \binom{s + 1}cq^{c} \sum_{j = 0}^s {A(s,j)U_{j - c} } } }.
\end{equation}
Thus,
\[
\begin{split}
W(m,n,r;a,b,p,q)&=\sum_{k = 1}^n {k^m w_{k + r} }\\
&  = \mathcal P_1 (m,n;p,q)w_{n + r}  + \mathcal P_2 (m,n;p,q)w_{n + r + 1}  + \mathcal C(m,r;a,b,p,q),
\end{split}
\]
where $\mathcal P_1(m,n;p,q)$ and $\mathcal P_2(m,n;p,q)$ are as stated in \eqref{eq.zesuvtj} and \eqref{eq.mklyh6b} and $\mathcal C(m,r;a,b,p,q)$ is given in \eqref{eq.idsbz8v}.
\section{Conclusion}
In this paper we addressed the Ledin and Brousseau summation problems. Recursive schemes and polynomial forms were established for $S(m,n,r)=\sum_{k = 1}^n {k^m F_{k + r} }$ and $T(m,n,r)=\sum_{k = 1}^n {k^m L_{k + r} }$ for non-negative integers $m$ and $n$ and arbitrary integer $r$. The study was extended to the general linear second order sequence $w_j(a,b;p,q)$. Recursive procedures and polynomial forms were established for the special restricted case $w_j(a,b;1,q)$ as well as for the general case, including sequences with indices in arithmetic progression. Ledin's suggestions in the concluding part of his paper remain fertile grounds for future research. It would be interesting to extend the study to $S(m,n,s,r)=\sum_{k=1}^n{k^mF_{k + r}^s}$. Ollerton and Shannon~\cite{ollerton20} also provided some areas of further research, such as exploring the sum $\sum_{k=1}^n{f(m,k)F_k}$ or proving the conjectures in their paper~\cite{shannon21}. Yet another area with much prospect would be generalizations to non-homogeneous Fibonacci/Lucas/Horadam sequences or higher order sequences.
\section{Acknowledgement}
The author is much indebted to the referee whose deep insight, useful suggestions and detailed constructive comments have helped to significantly improve the presentation.

\hrule

\bigskip

\bigskip
\noindent Concerned with sequences: A000032, A000045, A000129, A001045, A001582, A002450, A014551

\bigskip
\hrule
\bigskip

\vspace*{+.1in}
\noindent



\end{document}